\newtheorem {Lemma} {Lemma}
\newtheorem {Theorem}  {Theorem}
\newtheorem {Conjecture} {Conjecture}
\newtheorem {Problem} {Problem}
\newtheorem {Definition} {Definition}
\newtheorem {Fact} {Fact}
\begin{document}
\baselineskip = 15pt
\bibliographystyle{plain}

\title{\textsf{NP}-completeness of Tiling Finite Simply Connected Regions\\with a Fixed Set of Wang Tiles}
\date{}
\author{Chao Yang\\ 
              School of Mathematics and Statistics\\
              Guangdong University of Foreign Studies, Guangzhou, 510006, China\\
              sokoban2007@163.com, yangchao@gdufs.edu.cn\\
              \\
        Zhujun Zhang\\
              Big Data Center of Fengxian District, Shanghai, 201499, China\\
              zhangzhujun1988@163.com\\
              }

\maketitle

\begin{abstract}
The computational complexity of tiling finite simply connected regions with a fixed set of tiles is studied in this paper. We show that the problem of tiling simply connected regions with a fixed set of $23$ Wang tiles is \textsf{NP}-complete. As a consequence, the problem of tiling simply connected regions with a fixed set of $111$ rectangles is \textsf{NP}-complete. Our results improve that of Igor Pak and Jed Yang by using fewer numbers of tiles. Notably in the case of Wang tiles, the number has decreased by more than one third from $35$ to $23$.
\end{abstract}

\noindent{\textbf{Keywords}}:
Wang tiles, rectangles, plane tiling, simply connected region, \textsf{NP}-complete \\
MSC2020:  52C20, 68Q17

\section{Introduction} 
A Wang tile is a unit square with each edge assigned a color. Wang first study the problem of tiling the entire plane with translated copies from a set of Wang tiles such that adjacent tiles must be matched by colors \cite{wang61}, which is known as Wang's domino problem. It was shown by Berger that Wang's domino problem is undecidable \cite{b66}.

When tiling finite regions with Wang tiles, the problem becomes obvious in \textsf{NP}. In fact, the tiling problems for finite regions are known to be \textsf{NP}-complete, under several different conditions. A major type of tiling problems require each tile to be used exactly once, which resembles the jigsaw puzzles. The \textsf{NP}-completeness results of this type are obtained in \cite{dd07, tw06}. In this situation, the number of tiles increases as the region becomes larger.

Another type of tiling problem allows unlimited usage of translated copies from a set of tiles, just like the original Wang's domino problem. In this case, the tiling problems are well-defined with a fixed set of tiles. The tiling problems for finite regions remain \textsf{NP}-complete even for some fixed sets of tiles. If the regions are allowed to have holes, the tiling problems for finite regions are known to be \textsf{NP}-complete for several relatively small sets of tiles \cite{hinsu17,mr01}. In particular, it is \textsf{NP}-complete for just a set of two bars \cite{bnrr95}. A \textit{bar} is an $n\times 1$ or $1 \times m$ polyomino which can be viewed as gluing several Wang tiles together by unique internal colors (and all external sides receive the same color).

\begin{Theorem}[Tiling with two bars \cite{bnrr95}]\label{thm_2bar}
   Let $n,m\geq 2$ be two fixed integers ($n,m$ are not both $2$), and let $h_n$ and $v_m$ be a horizontal bar of length $n$, and a vertical bar of height $m$, respectively. Then the tiling problem for general finite regions (allowing holes) with the two bars $h_n$ and $v_m$ is \textsf{NP}-complete. 
\end{Theorem}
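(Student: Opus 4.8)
Membership in \textsf{NP} is the easy half: a candidate tiling of a region with $N$ cells consists of at most $N$ placed bars, so it is a certificate of size polynomial in $N$, and one checks in polynomial time that the bars lie inside the region, are pairwise disjoint, and cover every cell. Thus the whole burden is \textsf{NP}-hardness, which I would prove by a polynomial-time reduction from \textsc{3-Sat}. (One could instead reduce from \textsc{Planar 3-Sat} to avoid crossings, but since holes are permitted here I would build a crossover gadget and reduce from ordinary \textsc{3-Sat}.)

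The idea is to compile a formula $\varphi$ into a polyomino region $R_\varphi$ (with holes) laid out like a wiring diagram. The phenomenon being exploited is that a long, thin, suitably dimensioned channel in $R_\varphi$ can be tiled by $\{h_n,v_m\}$ in exactly two ways, differing by a shift of the bars along the channel (equivalently, by which side ``owns'' a junction cell); this two-valued choice carries one Boolean bit. One then designs small local gadgets whose admissible tilings enforce logical relations among the bits on the incident channels: a \emph{variable} gadget that produces a free bit and fans it out, \emph{wire} and \emph{corner} gadgets that propagate and route a bit, a \emph{negation} gadget that swaps the two phases, a \emph{crossover} gadget that lets two wires pass over one another (here the freedom to use holes is essential), and a \emph{clause} gadget that admits a valid tiling if and only if at least one of its three incoming literal-wires is \textbf{true}. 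Wiring these together according to $\varphi$ yields a region $R_\varphi$ of size polynomial in $|\varphi|$ that is tileable by $\{h_n,v_m\}$ if and only if $\varphi$ is satisfiable.

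Two points need genuine care and constitute the main obstacle. First, correctness of each gadget: one must show not only that the intended tilings exist but that \emph{every} tiling of a gadget has the intended form, i.e.\ no stray bar can leak across the walls of a channel or chamber and corrupt the bit-encoding; this is a finite case analysis per gadget, using crucially that $h_n$ has width $n\ge 2$ and $v_m$ has height $m\ge 2$ so that bars cannot turn a corner, together with the locality of the matching-type constraints so that the global region inherits the local behaviour of each gadget. Second, uniformity in the parameters: the construction must work for every admissible pair $(n,m)$, and I expect the case where one of $n,m$ equals $2$ (which by hypothesis forces the other to be $\ge 3$) to demand the most delicate gadget design, since there is less slack to carve out chambers; this is precisely where the hypothesis ``$n,m$ not both $2$'' enters, for if $n=m=2$ the two tiles are the dominoes and the tiling problem reduces to bipartite matching, hence lies in \textsf{P}. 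Once the gadgets are pinned down for this tight case, the remaining assembly and bookkeeping are routine.
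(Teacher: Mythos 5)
This theorem is not proved in the paper you were given: it is quoted from Beauquier--Nivat--R\'emila--Robson \cite{bnrr95} as a known result, so there is no in-paper proof to compare against. Judged on its own terms, your plan has the right architecture and matches the strategy of the cited source and of the related literature (\cite{mr01}): \textsf{NP} membership is trivial, the exclusion of $n=m=2$ is correctly explained by the reduction of domino tiling to bipartite matching, and the hardness argument is a gadget-based reduction from a \textsc{Sat} variant in which the one-dimensional ``slack'' of a bar-filled channel carries a Boolean phase and the region's holes give you the geometric freedom to build walls and crossings.

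The genuine gap is that everything you identify as ``the main obstacle'' is exactly the content of the theorem, and none of it is carried out. You assert, but do not exhibit, a channel with precisely two tilings, a variable gadget with fan-out three, a clause gadget tileable iff at least one incident wire is true, and a crossover; for this kind of result the gadgets \emph{are} the proof, and the correctness direction you flag (that \emph{every} tiling of a gadget has the intended form, with no bar straddling two gadgets) is a nontrivial invariant that must be set up globally, not gadget-by-gadget. Two specific points would need repair even at the planning level: (i) a $1\times k$ channel admits only horizontal bars and is tiled \emph{uniquely} when $n\mid k$, so the ``two tilings differing by a shift'' phenomenon only appears once you specify how the channel's end cells are shared with junction chambers --- this sharing convention is the actual encoding and has to be fixed before any gadget can be verified; (ii) the uniform treatment of all admissible $(n,m)$, in particular $\{2,m\}$ with $m\ge 3$ and the asymmetric interaction between the two bar lengths, is not a routine afterthought but changes the gadget dimensions throughout. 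As submitted, this is a correct and well-informed outline of the standard approach rather than a proof.
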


Theorem \ref{thm_2bar} implies that tiling general finite regions with a fixed set of $5$ Wang tiles (by breaking down two bars $h_3$ and $v_2$) is \textsf{NP}-complete. On the other hand, for tiling simply connected regions (i.e. without holes), more tiles are needed to obtain \textsf{NP}-completeness result. Jed Yang proved the following theorem \cite{y_thesis}.

\begin{Theorem}[\cite{y_thesis}]
    There exists a set $W$ of $35$ Wang tiles such that the tiling problem for finite simply connected regions with $W$ is \textsf{NP}-complete. 
\end{Theorem}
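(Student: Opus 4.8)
The plan is to prove the two directions of \textsf{NP}-completeness separately. Membership in \textsf{NP} is immediate and holds for every fixed set $W$: a tiling of a region $R$ places one unit square on each of the $|R|$ cells, so a nondeterministic machine may guess, for each cell, which tile of $W$ (in which translated copy) sits there, and then check in time polynomial in $|R|$ that every pair of edge-adjacent tiles agrees in color. The content of the theorem is therefore the \textsf{NP}-hardness of tiling \emph{simply connected} regions with one carefully chosen $35$-element set $W$.

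For hardness I would give a polynomial-time reduction from a convenient \textsf{NP}-complete satisfiability problem — say $3$-SAT, or a planar/monotone variant such as Planar Not-All-Equal $3$-SAT — encoding a formula $\varphi$ as a region $R_\varphi$ so that $R_\varphi$ has a $W$-tiling iff $\varphi$ is satisfiable. The region is drawn as a "circuit": thin corridors act as \emph{wires} that admit exactly two local tilings, read as the bit $0$ or $1$; \emph{turn} and \emph{fan-out} gadgets route and duplicate a bit; a \emph{crossover} gadget (or a planar layout of the incidence graph) lets wires pass each other; one \emph{source} gadget per variable freely chooses that variable's value; and each \emph{clause} gadget is tileable precisely when the bits arriving on its literal-wires form a satisfying combination. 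The color set of $W$ is kept small by using a shared palette: a couple of "signal" colors carrying the transmitted bit, a block of "filler" colors, and a few colors internal to each gadget type. Soundness (a $W$-tiling yields a satisfying assignment, because every wire is forced to be globally consistent and every clause gadget is thereby satisfied) and completeness (a satisfying assignment determines a tiling) are then checked gadget by gadget.

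The main obstacle — and the reason the simply connected case needs many more than the $5$ tiles that suffice when holes are allowed (Theorem \ref{thm_2bar}) — is the prohibition of holes. With holes one can simply delete a cell to "block" it; here every cell of $R_\varphi$ must be covered, so all of the area surrounding and between the wires has to be filled by a part of $W$ that is \emph{rigid}: with its boundary colors fixed it must have a unique tiling, and that tiling must not interact with the adjacent gadgets in any unintended way. I would dedicate a block of tiles to a "brick-wall" style filler whose unique tileability given its boundary is provable by a one-directional forcing (propagating row by row, or along diagonals), and then verify for each gadget the two delicate properties that make the whole scheme work: it is \emph{rigid away from its wires} yet \emph{flexible exactly along each wire}, and no tiling can "leak" an unintended pattern across the interface between a gadget and the filler. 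The real work is the simultaneous design: forcing all of these gadgets plus the filler to live on one $35$-color palette while retaining rigidity off the wires and the needed flexibility on them, which requires an ad hoc construction and a finite but nontrivial case analysis. Tallying the colors used by the source, wire, turn, fan-out, crossover, clause, and filler pieces yields the bound $35$, and the layout clearly has polynomial size and is computable in polynomial time.
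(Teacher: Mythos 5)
Your proposal correctly disposes of membership in \textsf{NP} and correctly identifies where the difficulty lies (no holes means everything must be filled by forced tilings), but as a proof it has a genuine gap: the entire content of the theorem is the existence of a \emph{specific} $35$-tile set together with a verification that its gadgets are rigid off the wires and flexible on them, and your write-up defers exactly that (``requires an ad hoc construction and a finite but nontrivial case analysis''). Asserting that ``tallying the colors \ldots yields the bound $35$'' is not a count of anything — and note that $35$ is the number of \emph{tiles}, not of colors. Without the explicit tile set and a gadget-by-gadget soundness/completeness argument, nothing has been proved.

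Beyond that, your architecture misses the mechanism that makes the known construction work. For Wang tiles the finite region comes with a color on every unit segment of its boundary, and it is this boundary coloring — not thin corridors separated by a rigid brick-wall filler — that does all the forcing: a color appearing only on variable tiles pins them to the left boundary, a color appearing only on clause tiles pins them to the right boundary, and single marked segments on the top and bottom boundaries of each rectangular sub-region force two diagonal chains of anchor tiles that meet and pin a crossover tile at a prescribed row. The formula is encoded by decomposing the permutation that routes each variable's three occurrences to its clauses into adjacent transpositions, one rectangular sub-region per transposition, with signals travelling strictly left to right in parallel rows through an ocean of forwarder tiles; the source problem is \textsc{Cubic Monotone 1-in-3 SAT}, whose fixed fan-out (each variable occurs exactly three times) and ``exactly one true'' clause condition are what keep the variable and clause gadgets down to a handful of tiles. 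Your free-form circuit with turn and fan-out gadgets and a generic satisfiability source is not obviously realizable within $35$ tiles, and in any case you would still have to produce the tiles and check them.
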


The main contribution of this paper is to decrease the total number of Wang tiles yet preserving the \textsf{NP}-completeness by proving Theorem \ref{thm_main}.

\begin{Theorem}\label{thm_main}
    There exists a set $W$ of $23$ Wang tiles such that the tiling problem for finite simply connected regions with $W$ is \textsf{NP}-complete. 
\end{Theorem}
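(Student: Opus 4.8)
The plan is to treat the two directions separately. Membership in \textsf{NP} is immediate: a candidate tiling of a finite region $R$ by $W$ is specified by recording, for each unit cell of $R$, which of the $23$ tiles occupies it (only translations are allowed, so there is nothing else to record), a certificate of size linear in $|R|$; one verifies in polynomial time that every pair of edge‑adjacent cells carries the same color on the shared edge, and that every boundary edge receives whatever the problem prescribes. So the substance is \textsf{NP}-hardness, which I would establish by a polynomial‑time reduction from a standard \textsf{NP}-complete problem — I would use \textsf{3-SAT}, preferably a planar or monotone variant so that no crossover gadget is needed (the crossover is the tile‑expensive component), though in principle one could instead simulate the holes‑allowed bar‑tiling problem of Theorem~\ref{thm_2bar}. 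Given an instance $\phi$, the reduction outputs the universal set $W$ (with $|W|=23$) together with a finite \emph{simply connected} region $R_\phi$ of size polynomial in $|\phi|$, such that $R_\phi$ admits a $W$-tiling iff $\phi$ is satisfiable.

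The heart of the construction is a gadget library realized inside $R_\phi$. I would design $W$ so that the bulk of $R_\phi$ is \emph{rigid} — a scaffold of cells whose tiles are forced by color‑matching — while a sparse network of thin ``wires'' carries logical signals: a wire cell has exactly two admissible tiles, coding the bits $0$ and $1$, and matching propagates the chosen bit along the wire and around \emph{turn} gadgets. On this skeleton I would build the usual components: a \emph{variable} gadget that injects one free bit; a \emph{fan‑out} gadget duplicating a bit onto several wires; and a \emph{clause} gadget wired to its three literals whose interior can be completed to a valid tiling precisely when at least one incoming bit is ``true''. Laying the variable, fan‑out, and clause gadgets out on a coarse grid and joining them by wires gives a region whose $W$-tilings, after deleting the forced cells, correspond bijectively to satisfying assignments of $\phi$. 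The forward direction (assignment $\Rightarrow$ tiling) is routine; the converse needs ``rigidity lemmas'' showing that outside the wires and clause interiors the tiling is unique, so that no tiling can cheat.

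The hard part is two‑fold, and it is exactly where the $35\to 23$ economy must come from. First, $R_\phi$ must be simply connected: a naive gadget layout leaves enclosed empty pockets, i.e. holes, so instead I would realize $R_\phi$ as a single blob — morally a filled rectangle into which the wires and gadgets are carved as boundary indentations — and prove a connectivity lemma asserting that every region of ``dead'' space reaches the outer boundary through a forced corridor, so that no hole is ever created. Making the clause and fan‑out gadgets and the wire routing conform to this ``no pocket'' discipline while still being logically correct is the most delicate layout task. Second, to cut the palette to $23$ I would reuse tiles and edge colors across distinct gadget roles (e.g. wires, turns, and fan‑outs share one family of bit‑carrying colors; scaffold tiles double as clause filler); this forces the rigidity and no‑cheating arguments to rule out unintended interactions that the color sharing could otherwise permit. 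I expect the bulk of the proof, and essentially all of its risk, to be this finite but intricate case analysis certifying that the $23$ chosen tiles force exactly the intended behavior everywhere.

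Finally, the rectangle corollary would follow by the standard simulation of Wang tiles by small rectangle configurations: each of the $23$ Wang tiles is replaced by a fixed gadget assembled from a constant number of pairwise distinct rectangles, color matching being enforced by interlocking ``teeth'' whose lengths encode the colors, and a simply connected region for the Wang problem is turned into a simply connected region for the rectangle problem by applying this substitution cellwise and filling seams with scaffold rectangles. Accounting for the rectangles shared among the $23$ gadgets yields the stated total of $111$.
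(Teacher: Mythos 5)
Your outline correctly places the problem in \textsf{NP} and correctly identifies that all the work lies in an explicit reduction, but as written it is a plan rather than a proof: the entire content of the theorem is the existence of a \emph{specific} set of $23$ tiles, and you never exhibit the tiles, the gadgets, or any accounting that lands on $23$. The number is asserted at the outset and never derived, so the ``finite but intricate case analysis'' you defer to the end is in fact the whole theorem.

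Beyond incompleteness, several of your design choices point away from what actually achieves $23$. You propose to reduce from a planar or monotone variant of \textsc{3-SAT} precisely in order to avoid crossover gadgets; the paper does the opposite. It reduces from \textsc{Cubic Monotone 1-in-3 SAT} and makes crossovers the centerpiece: all signals run in parallel straight horizontal rows from the left boundary (where variable tiles set them) to the right boundary (where clause tiles check them), and the only nontrivial routing is a sequence of adjacent transpositions realized by crossover tiles whose positions are pinned by anchor colors written on the top and bottom boundaries of rectangular sub-regions. This eliminates your ``turn'' and ``fan-out'' gadgets entirely (fan-out is free because each variable occurs exactly three times and the variable gadget is a height-$3$ bar emitting three identical signals), makes the region trivially simply connected --- it is essentially a rectangle with small notches, so your ``no pocket'' lemma is unnecessary --- and, crucially, the $1$-in-$3$ semantics lets the clause check be done with just two tiles, since the right boundary demands the fixed color pattern $0',0',1$ and the only freedom is sliding the unique true signal down to the bottom row. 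Your ``at least one literal true'' clause gadget and your general wire network would almost certainly cost more tiles, and nothing in the sketch shows otherwise. The rectangle corollary likewise does not follow from a generic ``teeth'' simulation applied to the $23$-tile set: it requires first passing to a modified set of $29$ Wang tiles whose parallel sides carry distinct colors, and then a careful count of rectangles shared between tiles, neither of which your final paragraph supplies.
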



Below is a formal definition of the main problem being studied in this paper. Because we only study the tiling problems for finite (i.e. bounded) regions in this paper, the word \textit{finite} is sometimes omitted in the rest of the paper.

\begin{Definition}[Tiling finite simply connected regions with a fixed set of tiles]
Let $T$ be a fixed set of tiles. Given an arbitrary finite simply connected region $D$, is it possible to tile the region $D$ with translated copies from $T$?
\end{Definition}

We focus on finding sets $T$ with as fewer number of tiles as possible yet the tiling problems for simply connected regions with $T$ are \textsf{NP}-complete. This number depends on different kinds of tiles. Besides Wang tiles, there are several other kinds of tiles which have received considerable studies. A \textit{polyomino} is a simply connected tile by gluing several unit squares together edge-to-edge. A \textit{generalized Wang tile} is a polyomino with a color assigned to each unit segment on its boundary. A \textit{rectangle} is a rectangular polyomino (this excludes rectangles with irrational ratios). When restricted to these four kinds of tiles (Wang tiles, generalized Wang tiles, polyominoes, and rectangles), the simply connected region $D$ we considered can be also restricted to polyomino regions. For Wang tiles or generalized Wang tiles, the regions always implicitly include colors assigned to each of the unit segments of the boundaries. Therefore, in addition to the shape of the region, the tilings must also match the colors on the boundary of the region.

The second column of Table \ref{tbl_min} summarizes the known minimum sizes (i.e. the number of tiles in $T$) of tile sets $T$  that tiling simply connected regions with $T$ is \textsf{NP}-complete, for different kinds of tiles \cite{py13, py13arxiv,y_thesis}.

\begin{table}[H]
\begin{center}
\begin{tabular}{|l|r|r|}
\hline
Kinds of tiles          & Known minimum sizes  & Sizes of our results \\ \hline
Wang tiles             & 35      & 23                        \\ \hline
Generalized Wang tiles & 15         & 14    and 8                \\ \hline
Rectangles             & 117            & 111                 \\ \hline
\end{tabular}
\end{center}
\caption{Sizes of $T$  that tiling simply connected regions with $T$ is \textsf{NP}-complete.}\label{tbl_min}
\end{table}

In this paper, all the known minimum sizes in the second column of Table \ref{tbl_min} have been improved as shown in the third column. The main result, Theorem \ref{thm_main}, is proved following \cite{py13, py13arxiv,y_thesis} by reduction from the \textsf{NP}-complete problem \textsc{Cubic Monotone 1-in-3 SAT} \cite{mr01}. We introduce several novel techniques in the reduction to decrease the number of Wang tiles.

\begin{Definition}[\textsc{Cubic Monotone 1-in-3 SAT}]
An instance of \textsc{Cubic Monotone 1-in-3 SAT} problem is a \textsc{3SAT} instance satisfying the following additional conditions: (1) each variable appears in the formula exactly $3$ times (i.e. cubic); (2) none of the variables appears in negation (i.e. monotone, also referred to as positive). (3) Furthermore, an instance is satisfied if and only if exactly $1$ of every $3$ variables in every clause is true (i.e. $1$-in-$3$). 
\end{Definition}

By the definition of \textsc{Cubic Monotone 1-in-3 SAT}, every instance must have the same number of variables and clauses. An example instance is $\varphi=(x_1 \vee x_1 \vee x_3) \wedge (x_2 \vee x_2 \vee x_3) \wedge (x_1 \vee x_2 \vee x_3)$.

The rest of the paper is organized as follows. Section \ref{sec_wang} gives the proof of Theorem \ref{thm_main}. Section \ref{sec_rect} proves a variant of Theorem \ref{thm_main}. With this variant, we show that tiling simply connected regions with a set of $111$ rectangles is \textsf{NP}-complete. Section \ref{sec_con} concludes with a few remarks.

\section{Tiling with $23$ Wang Tiles} \label{sec_wang} 


We will prove Theorem \ref{thm_main} in this section in two steps. First, we construct a set $W$ of $23$ Wang tiles, as illustrated in Figure \ref{fig_wang_23}. Secondly, the \textsf{NP}-hardness of tiling simply connected region with $W$ is shown by reduction from the \textsc{Cubic Monotone 1-in-3 SAT} problem. For each instance $\varphi$ of \textsc{Cubic Monotone 1-in-3 SAT} problem, we will construct a region $D$ such that formula $\varphi$ is (1-in-3) satisfied if and only if the region $D$ can be tiled by the set $W$ of Wang tiles. The novel techniques that we incorporate to the general reduction method of Pak and Yang \cite{py13,py13arxiv,y_thesis} will be summarized after the proof of Theorem \ref{thm_main}.


\begin{figure}[H]
\begin{center}
\begin{tikzpicture}


\draw (0,0)--(1,0)--(1,3)--(0,3)--(0,0);

\foreach \y in {1,2}
{
\draw (1,\y)--(0,\y);
\node at (0.2,\y+0.5) {$v$}; 
\node at (0.8,\y+0.5) {$0$};
}

\draw (0,0)--(0.5,0.5)--(1,0);\draw (0,3)--(0.5,2.5)--(1,3);
\draw (0.5, 0.5)--(0.5,2.5);
\node at (0.5,0.2) {$b$}; \node at (0.5,2.8) {$b$};
\node at (0.2,0.5) {$v$}; 
\node at (0.8,0.5) {$0$};


\draw (4,0)--(3,0)--(3,1)--(4,1)--(4,0)--(3,1);
\draw (3,0)--(4,1);
\node at (3.5,0.2) {$b$}; 
\node at (3.5,0.8) {$b$}; 
\node at (3.8,0.5) {$1$}; 
\node at (3.2,0.5) {$v$}; 


\foreach \x in {9}
{
\draw (\x+0,0)--(\x+1,0)--(\x+1,2)--(\x+0,2)--(\x+0,0);
\draw (\x+0,1)--(\x+1,1);

\draw (\x+0,0)--(\x+0.5,0.5)--(\x+1,0);\draw (\x+0,2)--(\x+0.5,1.5)--(\x+1,2);
\draw (\x+0.5, 0.5)--(\x+0.5,1.5);
\node at (\x+0.5,0.2) {$b$}; \node at (\x+0.5,1.8) {$b$};
\node at (\x+0.2,0.5) {$0$}; 
\node at (\x+0.8,0.5) {$1$};
\node at (\x+0.2,1.5) {$1$}; 
\node at (\x+0.8,1.5) {$0'$};
}

\foreach \x in {3}
{
\draw (\x+4,0)--(\x+3,0)--(\x+3,1)--(\x+4,1)--(\x+4,0)--(\x+3,1);
\draw (\x+3,0)--(\x+4,1);
\node at (\x+3.5,0.2) {$b$}; 
\node at (\x+3.5,0.8) {$b$}; 
\node at (\x+3.8,0.5) {$0'$}; 
\node at (\x+3.2,0.5) {$0$}; 

}


\foreach \y in {-3}
\foreach \x in {0}
{
\draw (\x+4,0+\y)--(\x+3,0+\y)--(\x+3,1+\y)--(\x+4,1+\y)--(\x+4,0+\y)--(\x+3,1+\y);
\draw (\x+3,0+\y)--(\x+4,1+\y);
\node at (\x+3.5,0.2+\y) {$l$}; 
\node at (\x+3.5,0.8+\y) {$l$}; 
\node at (\x+3.8,0.5+\y) {$i$}; 
\node at (\x+3.2,0.5+\y) {$i$}; 
}

\foreach \y in {-3}
\foreach \x in {5.5}
{
\draw (\x+0,0+\y)--(\x+2,0+\y)--(\x+2,1+\y)--(\x+0,1+\y)--(\x+0,0+\y);
\draw (\x+1,1+\y)--(\x+1,0+\y);

\draw (\x+0,0+\y)--(\x+0.5,0.5+\y)--(\x+0,1+\y);\draw (\x+2,0+\y)--(\x+1.5,0.5+\y)--(\x+2,1+\y);
\draw (\x+0.5, 0.5+\y)--(\x+1.5,0.5+\y);
\node at (\x+0.2,0.5+\y) {$i$}; 
\node at (\x+1.8,0.5+\y) {$i$};
\node at (\x+0.5,0.8+\y) {$b$}; 
\node at (\x+0.5,0.2+\y) {$r$};
\node at (\x+1.5,0.8+\y) {$r$}; 
\node at (\x+1.5,0.2+\y) {$b$};
}


\foreach \x in {-3}
\foreach \y in {-3}
{
\draw (\x+4,0+\y)--(\x+3,0+\y)--(\x+3,1+\y)--(\x+4,1+\y)--(\x+4,0+\y)--(\x+3,1+\y);
\draw (\x+3,0+\y)--(\x+4,1+\y);
\node at (\x+3.5,0.2+\y) {$b$}; 
\node at (\x+3.5,0.8+\y) {$b$}; 
\node at (\x+3.8,0.5+\y) {$i$}; 
\node at (\x+3.2,0.5+\y) {$i$}; 
}


\foreach \x in {9}
\foreach \y in {-3}
{
\draw (\x+0,0+\y)--(\x+1,0+\y)--(\x+1,2+\y)--(\x+0,2+\y)--(\x+0,0+\y);
\draw (\x+0,1+\y)--(\x+1,1+\y);

\draw (\x+0,0+\y)--(\x+0.5,0.5+\y)--(\x+1,0+\y);\draw (\x+0,2+\y)--(\x+0.5,1.5+\y)--(\x+1,2+\y);
\draw (\x+0.5, 0.5+\y)--(\x+0.5,1.5+\y);
\node at (\x+0.5,0.2+\y) {$l$}; \node at (\x+0.5,1.8+\y) {$r$};
\node at (\x+0.2,0.5+\y) {$i$}; 
\node at (\x+0.8,0.5+\y) {$j$};
\node at (\x+0.2,1.5+\y) {$j$}; 
\node at (\x+0.8,1.5+\y) {$i$};
}

\node at (1.5,0.2) {$V_0$}; 
\node at (4.5,0.2) {$V_1$}; 
\node at (7.5,0.2) {$C_0$}; 
\node at (10.5,0.2) {$C_1$};

\node at (4.5,-2.8) {$L_i$}; 
\node at (8,-2.8) {$R_i$}; 
\node at (1.5,-2.8) {$F_i$}; 
\node at (10.5,-2.8) {$X_{ij}$};

\end{tikzpicture}
\end{center}
\caption{The variables ($V_0, V_1$), clauses ($C_0, C_1$), forwarders ($F_i, i=0,1$), left anchors ($L_i, i=0,1$) and  right anchors ($R_i, i=0,1$), and crossovers ($X_{ij}, i,j=0,1$).}\label{fig_wang_23}
\end{figure}

\begin{proof}[Proof of Theorem \ref{thm_main}]
The $23$ Wang tiles (see Figure \ref{fig_wang_23}) are glued together by unique colors so that they must be used in groups as generalized Wang tiles (some generalized Wang tiles consist of a single Wang tile). There are five types of generalized Wang tiles. We will introduce each of them and their intended usage in simulating the formulas. Since the tiles need to match the boundary of the region, their functionality can be fully understood only after the region has been constructed according to a given formula $\varphi$.

The first type is the \textit{variable} tiles which are responsible for assigning truth value to each of the variables of the formula $\varphi$. The variable tile $V_0$ is a vertical bar of height $3$, and the variable tile $V_1$ is a single Wang tile. The west sides of both variable tiles are assigned color $v$, which only appears in the variable tiles. This unique color will make sure that they can only be placed next to the left boundary of the region which will be introduced later. Intuitively speaking, the variable tiles set the truth values which will be sent to the right across the region to be tiled. The colors $1$ and $0$ represent true and false, respectively.

The second type is the \textit{clause} tiles $C_0$ and $C_1$. There is also a color that only appears in the clause tiles, namely $0'$ (to be clear, $0$ and $0'$ are two different colors). This color guarantees that the clause tiles must be placed next to the right boundary of the region, where the clause tiles will check each clause of $\varphi$ by receiving the truth values of the variables coming from the left.

The third type is the \textit{forwarders} $F_i (i=1,2)$, which also has two variants. The forwarders just let the signals, either $1$ or $0$, pass from left to right.

The fourth type is the anchor tiles. The \textit{left anchors} $L_i (i=1,2)$ and the \textit{right anchors} $R_i (i=1,2)$ work together to control the location at which two parallel signals (truth values) should be swapped. The right anchors and the left anchors are placed starting at the top and bottom boundary of the region, respectively, and extending to the interior. The swap of two signals takes place at the location where the left and right anchors meet at the interior of the region. Both the left anchors and the right anchors come in two variants, differing by the color on the vertical sides (east and west sides). The colors on the vertical sides are either both $1$ or both $0$. This allows the signals to pass the anchors from left to right.

The fifth and the last type is the \textit{crossovers} $X_{ij} (i,j=1,2)$. The crossovers must be put at the places where the left anchors and the right anchors meet. The job of the crossovers is to swap the signals on two adjacent rows. This is done by assigning proper colors on the west and east sides of each crossover tile. For example, the colors on the west and east sides of the crossover tile $X_{10}$ are $01$ and $10$, respectively. Therefore it swaps two adjacent signals of $0$ and $1$. There are four variants of crossovers, corresponding to four different combinations of two adjacent signals.

We have finished introducing our fixed set of generalized Wang tiles. Breaking down this set of generalized Wang tiles, we get a set $W$ of $23$ Wang tiles.

Now, for each instance $\varphi$ of \textsc{Cubic Monotone 1-in-3 SAT} problem, we construct a finite simply connected region for the tiling problem with the above fixed set $W$ of $23$ Wang tiles. Without loss of generality, we take the formula 
$$\varphi=(x_1 \vee x_1 \vee x_3) \wedge (x_2 \vee x_2 \vee x_3) \wedge (x_1 \vee x_2 \vee x_3)$$
as an example to explain the construction of the region in detail. The region constructed for this formula $\varphi$ is shown in Figure \ref{fig_region}. The construction method could be extended to the general cases naturally. 

The region consists of three parts (see Figure \ref{fig_region}): the left part for variable tiles (in light orange), the right part for clause tiles (in light violet), and the central part (in green) for the signals to be transmitted from the variable part to the clause part. The central part can be further divided into several rectangular sub-regions (as illustrated in darker or lighter green). Each sub-region will take care of the crossover of a pair of adjacent signals.

Since every variable appears in $\varphi$ exactly three times, we have three signals for each variable. As we have already seen in Figure \ref{fig_region}, the orange area on the left is separated into $3$ sub-regions by a redundant row between every two adjacent sub-regions. The $3$ sub-regions represent the $3$ variables $x_1$, $x_2$ and $x_3$, respectively, from top to bottom. Each sub-region has exactly three squares, so it can be occupied by either a single variable tile $V_0$, or three copies of the variable tile $V_1$. In other words, the $3$ signals for the same variable must be set to either all $1$ or all $0$. The two redundant signals separating the $3$ sub-regions are denoted by $z_1$ and $z_2$, and their truth value does not matter. We can simply assign $0$ to all the redundant variables by setting the color $0$ to the unit segments of the left and right boundary of the region which correspond to the redundant variables.

\begin{figure}[H]
\begin{center}
\begin{tikzpicture}[scale=0.6]

\draw [fill=orange!20,draw=none] (-1,0)--(0,0)--(0,3)--(-1,3)--(-1,0);
\draw [fill=orange!20,draw=none] (-1,4)--(0,4)--(0,7)--(-1,7)--(-1,4);
\draw [fill=orange!20,draw=none] (-1,8)--(0,8)--(0,11)--(-1,11)--(-1,8);

\draw [fill=lime!20,draw=none]  (0,0)--(8,0)--(8,11)--(0,11)--(0,0);
\draw [fill=lime!40,draw=none]  (8,0)--(12,0)--(11,5)--(12,9)--(11,11)--(8,11)--(8,0);
\draw [fill=lime!40,draw=none]  (12.5,0)--(11.5,5)--(12.5,9)--(11.5,11)--(15,11)--(15,0)--(12.5,0);
\draw [fill=lime!20,draw=none]  (15,0)--(15,11)--(20,11)--(20,0)--(15,0);

\foreach \y in {0,4,8}
{
\draw [fill=violet!20,draw=none] (20,0+\y)--(22,0+\y)--(22,2+\y)--(21,2+\y)--(21,3+\y)--(20,3+\y)--(20,0+\y);
}

\draw (-1,11)--(-1,8)--(0,8)--(0,7)--(-1,7)--(-1,4)--(0,4)--(0,3)--(-1,3)--(-1,0)--(12,0)--(11,5)--(12,9)--(11,11)--(-1,11);
\draw (12.5,0)--(11.5,5)--(12.5,9)--(11.5,11)--(21,11)--(21,10)--(22,10)--(22,8)--(20,8)--(20,7)--(21,7)--(21,6)--(22,6)--(22,4)--(20,4)--(20,3)--(21,3)--(21,2)--(22,2)--(22,0)--(12.5,0);

\foreach \y in {1,2,3,5,6,7,9,10,11}
{
\node at (-1.3,\y-0.5) {$v$}; 
}

\foreach \x in {0,21}
\foreach \y in {-0.2, 3.2, 3.8, 7.2, 7.8, 11.2}
{
\node at (\x-0.5,\y) {\small $b$}; 
}

\foreach \x in {22}
\foreach \y in {-0.2, 2.2, 3.8, 6.2, 7.8, 10.2}
{
\node at (\x-0.5,\y) {\small $b$}; 
}

\foreach \y in {3.5, 7.5}
{
\node at (-0.2,\y) {\small $0$}; 
}
\foreach \y in {3.5, 7.5}
{
\node at (20.2,\y) {\small $0$}; 
}

\foreach \y in {3,7,11}
{
\node at (21.3,\y-0.5) {\small $0'$}; 
\node at (22.3,\y-1.5) {\small $0'$}; 
\node at (22.3,\y-2.5) {\small $1$}; 
}

\node at (-3,10.5) {$x_1$};
\node at (-3,9.5) {$x_1$};
\node at (-3,8.5) {$x_1$};
\node at (-3,7.5) {$z_1$};
\node at (-3,6.5) {$x_2$};
\node at (-3,5.5) {$x_2$};
\node at (-3,4.5) {$x_2$};
\node at (-3,3.5) {$z_2$};
\node at (-3,2.5) {$x_3$};
\node at (-3,1.5) {$x_3$};
\node at (-3,0.5) {$x_3$};

\node at (24,10.5) {$x_1$};
\node at (24,9.5) {$x_1$};
\node at (24,8.5) {$x_3$};
\node at (24,7.5) {$z_1$};
\node at (24,6.5) {$x_2$};
\node at (24,5.5) {$x_2$};
\node at (24,4.5) {$x_3$};
\node at (24,3.5) {$z_2$};
\node at (24,2.5) {$x_1$};
\node at (24,1.5) {$x_2$};
\node at (24,0.5) {$x_3$};

\end{tikzpicture}
\end{center}
\caption{The simply connected region corresponding to $\varphi$.}\label{fig_region}
\end{figure}

In all, $11$ signals are transmitting from the left boundary to the right in parallel, and they are $x_1$, $x_1$, $x_1$, $z_1$, $x_2$, $x_2$, $x_2$, $z_2$, $x_3$, $x_3$ and $x_3$, from top to bottom.

\begin{figure}[H]
\begin{center}
\begin{tikzpicture}[scale=0.6]

\draw (0,0)--(2,0)--(2,2)--(1,2)--(1,3)--(0,3)--(0,0); \draw (0,2)--(1,2)--(1,0); \draw (0,1)--(2,1);
\node at (1.3,2.5) {\small $0'$}; 
\node at (2.3,1.5) {\small $0'$}; 
\node at (2.3, 0.5) {\small $1$}; 

\node at (0.5, 2.5) {\small $C_0$}; \node at (0.5, 1.5) {\small $F_0$};  \node at (1.5, 1.5) {\small $C_0$}; 
\node at (0.5, 0.5) {\small $F_1$}; \node at (1.5, 0.5) {\small $F_1$}; 

\foreach \x in {5}
{
\draw (\x+0,0)--(\x+2,0)--(\x+2,2)--(\x+1,2)--(\x+1,3)--(\x+0,3)--(\x+0,0); \draw (\x+0,2)--(\x+1,2)--(\x+1,0); \draw (\x+0,1)--(\x+1,1);
\node at (\x+1.3,2.5) {\small $0'$}; 
\node at (\x+2.3,1.5) {\small $0'$}; 
\node at (\x+2.3, 0.5) {\small $1$}; 

\node at (\x+0.5, 2.5) {\small $C_0$}; \node at (\x+0.5, 1.5) {\small $F_1$};  \node at (\x+1.5, 1) {\small $C_1$}; 
\node at (\x+0.5, 0.5) {\small $F_0$};  

}

\foreach \x in {10}
{
\draw (\x+0,0)--(\x+2,0)--(\x+2,2)--(\x+1,2)--(\x+1,3)--(\x+0,3)--(\x+0,0); \draw (\x+1,2)--(\x+1,2)--(\x+1,0); \draw (\x+0,1)--(\x+1,1);
\node at (\x+1.3,2.5) {\small $0'$}; 
\node at (\x+2.3,1.5) {\small $0'$}; 
\node at (\x+2.3, 0.5) {\small $1$}; 

\node at (\x+0.5, 2) {\small $C_1$};    \node at (\x+1.5, 1) {\small $C_1$}; 
\node at (\x+0.5, 0.5) {\small $F_0$};  

}


\foreach \y in {0,1,2}
{
\draw [->, thick] (-1,0.5+\y)--(0,0.5+\y);
}
\node at (-1.2, 0.5) {\small $1$};  
\node at (-1.2, 1.5) {\small $0$};  
\node at (-1.2, 2.5) {\small $0$};

\foreach \x in {5}
{
\foreach \y in {0,1,2}
{
\draw [->, thick] (\x+-1,0.5+\y)--(\x+0,0.5+\y);
}
\node at (\x+-1.2, 0.5) {\small $0$};  
\node at (\x+-1.2, 1.5) {\small $1$};  
\node at (\x+-1.2, 2.5) {\small $0$};  
}

\foreach \x in {10}
{
\foreach \y in {0,1,2}
{
\draw [->, thick] (\x+-1,0.5+\y)--(\x+0,0.5+\y);
}
\node at (\x+-1.2, 0.5) {\small $0$};  
\node at (\x+-1.2, 1.5) {\small $0$};  
\node at (\x+-1.2, 2.5) {\small $1$};  
}

\end{tikzpicture}
\end{center}
\caption{The clause sub-regions.}\label{fig_clause_region}
\end{figure}

The violet area on the right of the region is also separated into sub-regions by redundant rows. Each sub-region corresponds to a clause of $\varphi$. For example, the first sub-region of violet area will check the first clause of $\varphi$, namely $x_1 \vee x_1 \vee x_3$. More precisely, the first, second, and third rows of the first sub-region will receive the signals $x_1$, $x_1$, and $x_3$, respectively. By the definition of \textsc{Cubic Monotone 1-in-3 SAT} problem, a formula is (1-in-3) satisfied if and only if exactly one variable of three is true for every clause. So the east sides of the boundary of each sub-region of the clause area are colored $0'$, $0'$, and $1$. If the unique signal $1$ comes in the third row of a sub-region, we can just use $F_i (i=1,2)$ and $C_0$ to transmit the signals to the right boundary (see the left of Figure \ref{fig_clause_region}). If the signal $1$ comes in the second row, use a copy of $C_1$ to swap the signal $1$ one unit downwards to the right boundary (see the middle of Figure \ref{fig_clause_region}). If the signal $1$ come in the first row, use two copies of $C_1$ to swap the signal $1$ two units downwards to the right boundary (see the right of Figure \ref{fig_clause_region}). Note also that $C_0$ and $C_1$ have a color of $0'$ so that they can be placed only within the clause area. Therefore, they can be used to redirect a true signal (i.e. a signal $1$) only within each clause.

On the other hand, if the number of true signals coming into a clause is $0$, $2$, or $3$, then there is no way we can fill the clause area with the $23$ Wang tiles. So we have the following fact.

\begin{Fact}\label{fct_valid}
    A sub-region for a clause can be tiled with the set $W$ of $23$ Wang tiles if and only if there is exactly one true signal coming from the three rows on its left.
\end{Fact}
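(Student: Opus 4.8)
The plan is to verify Fact~\ref{fct_valid} by an exhaustive case analysis on the number of true signals entering a clause sub-region, exploiting the rigidity forced by the unique colors $0'$ and $1$ on the east boundary together with the color-matching constraints of the tile set $W$. First I would observe that the only generalized Wang tiles carrying the color $0'$ on any edge are the clause tiles $C_0$ and $C_1$, so every square of a clause sub-region must be covered by a copy of $C_0$ or $C_1$ (a variable tile cannot reach here because of the color $v$, a forwarder $F_i$ alone cannot produce $0'$ on the right boundary, and the anchors/crossovers use disjoint colors $l,r,i,j$). Thus the clause sub-region is a $2\times3$-ish polyomino (one column removed on the bottom two rows per the shape in Figure~\ref{fig_clause_region}) that must be packed entirely by $\{C_0, C_1, F_0, F_1\}$, and I would catalog exactly which column-by-column configurations are legal.

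Next I would make precise the ``signal-routing'' semantics: $C_0$ forwards a signal horizontally while the neighbouring rows stay consistent, whereas $C_1$ shifts a true signal down by one row (its west color $01$ versus east color $10$ encodes exactly the swap described in the crossover discussion, reused here inside the clause). The east boundary demands the pattern $(0',0',1)$ from top to bottom, i.e. exactly one $1$ must exit, and it must exit in the bottom row. So given the three incoming colors on the west side of the sub-region, a valid tiling exists iff the single incoming $1$ can be ``steered'' down to the bottom row using $C_1$'s and the other two rows carry $0$'s forwarded by $C_0$ or $F_i$. I would then run the three positive cases explicitly — the true signal enters in row $3$ (route straight across, left picture of Figure~\ref{fig_clause_region}), in row $2$ (one $C_1$ drops it one row, middle picture), or in row $1$ (two $C_1$'s drop it two rows, right picture) — checking in each that all internal colors match and the boundary conditions $b$ on north/south and $0$ on the redundant-row sides are met.

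For the converse (the ``only if'' direction), I would argue that if the number of incoming true signals is $0$, $2$, or $3$, no legal packing exists. The key point is a conservation/parity argument: neither $C_0$ nor $F_i$ changes the multiset of signal values across its width, and $C_1$ preserves the number of $1$'s (it merely moves one $1$ vertically), so the number of $1$'s on the east boundary equals the number on the west boundary. Since the east boundary has exactly one $1$, any clause receiving $0$, $2$, or $3$ true signals forces a color mismatch on the east boundary — there is literally no assignment of $C_0/C_1$ tiles whose right edges read $(0',0',1)$. I would spell this out by noting that $0'$ can only be emitted opposite an internal $0$-signal and $1$ only opposite an internal $1$-signal, so the count is rigid.

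The main obstacle I anticipate is not the counting itself but making the ``the sub-region must be tiled entirely by clause tiles, and the column structure is forced'' claim fully rigorous: one has to rule out, for instance, a forwarder $F_i$ or stray $C_0$ sneaking a signal in from outside the clause area through the redundant separator rows, or a $C_1$ being placed so that its $0'$-edge faces into the green central region rather than the violet boundary. This is handled by the unique-color bookkeeping (the separator rows are pinned to color $0$ on both sides, and $0'$ appears nowhere in the central-region tiles), but it requires carefully tracking where each color can legally appear — essentially a small finite check that every other placement creates an unmatchable edge. Once that rigidity is established, Fact~\ref{fct_valid} follows immediately from the finite case analysis above.
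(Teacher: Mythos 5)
Your proposal is correct, and on the positive direction it is essentially the paper's own argument: the paper just exhibits the three tilings of Figure~\ref{fig_clause_region} (true signal entering in the bottom, middle, or top row, handled with zero, one, or two copies of $C_1$). Where you genuinely add something is the negative direction: the paper offers only the bare assertion that $0$, $2$ or $3$ true signals make the clause sub-region untileable, whereas your conservation-of-ones argument (each of $F_0,F_1,C_0,C_1$ carries equally many $1$'s on its west and east edges, internal vertical edges cancel in the telescoping sum over tiles, and the east boundary carries exactly one $1$) makes this rigorous in one line once the rigidity claim is in place. That rigidity claim --- only $F_i$, $C_0$, $C_1$ can occur inside a clause sub-region --- is indeed the real work, and your reduction of it to color bookkeeping is sound: no tile has $v$ on its east side and $v$ does not occur on the relevant boundary, so variable tiles are excluded, and any $L$, $R$ or $X$ placed in a $3$-row strip bounded above and below by $b$ propagates an $l$ or $r$ to a $b$-colored boundary edge. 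A few slips that do not affect the argument but should be fixed: your opening claim that every square of the sub-region is covered by $C_0$ or $C_1$ contradicts your own (correct) later statement that forwarders appear there; the sub-region is a $2\times 3$ block missing its \emph{top-right} cell, not a cell from the bottom two rows; and $C_1$'s east colors are $(1,0')$ from bottom to top rather than $(1,0)$ --- the $0'$ is precisely what pins $C_1$ against the right boundary.
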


Because each variable appears exactly three times in the formula $\varphi$, there is a natural bijection between the signals emitted from the left and the signals to be received to the right. For the redundant variables we have introduced to separate variables and clauses, we just map them to the same row on the right. So from top to bottom, the signals to be received for the right part are $x_1$, $x_1$, $x_3$, $z_1$, $x_2$, $x_2$, $x_3$, $z_2$, $x_1$, $x_2$ and $x_3$, which is a permutation of the signals emitted from the variable area.

It is a well-known fact that every permutation is the composition of transpositions of two adjacent elements. Moreover, a permutation of order $n$ is the composition of no more than $n^2$ adjacent transpositions (this implies the reduction can be done within polynomial time). Let $\textsf{swap}(k)$ be the adjacent transposition of the $k$-th and $(k+1)$-th positions in our sequence of signals (i.e. signals on the $k$-th and $(k+1)$-th rows). Then by applying transpositions $\textsf{swap}(8)$, $\textsf{swap}(7)$, $\textsf{swap}(6)$, $\textsf{swap}(5)$, $\textsf{swap}(4)$, $\textsf{swap}(3)$, $\textsf{swap}(4)$, $\textsf{swap}(5)$, $\textsf{swap}(6)$, $\textsf{swap}(9)$, $\textsf{swap}(8)$, $\textsf{swap}(7)$, $\textsf{swap}(9)$ and $\textsf{swap}(8)$ to signal sequence  
$$x_1x_1 x_1 z_1 x_2 x_2 x_2 z_2 x_3 x_3 x_3$$ 
emitted from the left, we get the signal sequence  
$$x_1 x_1 x_3 z_1 x_2 x_2 x_3 z_2 x_1 x_2 x_3$$ 
which we would like to receive to the right.

\begin{figure}[H]
\begin{center}
\begin{tikzpicture}[scale=0.6]

\draw [fill=lime!20] (0,0)--(8,0)--(8,11)--(0,11)--(0,0);

\foreach \y in {1,2,5,6,7,8,9,10,11}
{
\draw [->, thick] (-0.5,\y-0.5)--(8.5, \y-0.5);
}

\foreach \x in {0,...,6}
{
\draw [fill=white] (-\x+6,11-\x)--(-\x+8,11-\x)--(-\x+8,10-\x)--(-\x+6,10-\x)--(-\x+6,11-\x);
\node at (7-\x, 10.5-\x) {\small $R$}; 
}

\draw [fill=white] (0,0)--(0,1)--(1,1)--(1,0)--(0,0);
\node at (0.5, 0.5) {\small $L$}; 

\draw [fill=white] (0,1)--(0,2)--(1,2)--(1,1)--(0,1);
\node at (0.5, 1.5) {\small $L$}; 

\draw [fill=white] (1,2)--(1,4)--(0,4)--(0,2)--(1,2);
\node at (0.5,3.5) {\small $X$}; 
\draw [->, thick, red] (-0.5, 3.5)--(0,3.5)--(1, 2.5)--(8.5, 2.5);
\draw [->, thick, blue] (-0.5, 2.5)--(0,2.5)--(1, 3.5)--(8.5, 3.5);

\foreach \x in {1,...,7}
{
\node at (\x-0.5,11.3) {\small $b$}; 
\node at (\x+0.5,-0.3) {\small $b$}; 
}
\node at (7.5,11.3) {\small $r$}; 
\node at (0.5,-0.3) {\small $l$}; 

\end{tikzpicture}
\end{center}
\caption{The crossover sub-region.}\label{fig_crossover_region}
\end{figure}

Swapping two adjacent signals can be achieved by a sub-region illustrated in Figure \ref{fig_crossover_region}. The width of this rectangular sub-region is $8$, and it forces the signals on $8$-th and $9$-th rows to swap their positions. This is done by the colors of the top and bottom boundaries. The top boundary is colored $b^7r$ and the bottom boundary is colored $l b^7$, from left to right. The only tile that can be placed directly below the color $r$ on the top boundary is a right anchor tile $R$ (tile $R$ refers to either $R_1$ or $R_2$, we sometimes omit the subscripts of other types of tiles similarly in the rest of the paper). Then another right anchor must be placed below the first one, with one unit shifted to the left. By repeating this process, we have an array of right anchors extending along the southwest direction. Similarly, an array of left anchors $L$ must be placed above the color $l$ on the bottom boundary and extend to the north. But these two arrays of anchors will cross each other, and there is a conflict in color when they are about to meet. The only way to resolve this conflict is to put a crossover tile $X$, which links the colors $r$ and $l$, between the two arrays when they are two units apart. In short, the locations of the colors $r$ and $l$ on the top and bottom boundaries determine the location of the crossover tile $X$. In this example illustrated by Figure \ref{fig_crossover_region}, the $X$ tile is placed on the $8$-th and $9$-th rows, so the $8$-th and $9$-th signals are swapped when they are transmitted through the crossover tile $X$.

For all other areas (illustrated in light green) in Figure \ref{fig_crossover_region}, only the forwarder tiles $F$ can be placed there. Note that all the tiles $R$, $L$, $X$ and $F$ have different variants, and the signals coming from the left determine which variant to be used. The tiling of the crossover sub-region can be described intuitively as a crossover tile $X$ being fixed by two kinds of anchors tiles ($R$s and $L$s) in an ocean of forwarders ($F$s).

In general, a sub-region is similar to Figure \ref{fig_crossover_region} but with a different width $w$ will swap the signals on the $w$-th and $(w+1)$-th rows. We put appropriate crossover sub-region consecutively between the variable area on the left and the clause area on the right to complete the construction of the simply connected region associated with the formula $\varphi$. In a sense, the formula is encoded by the boundary of the region, especially the boundary on the top and at the bottom, which encodes all the adjacent transpositions needed to guide all the signals from the left to the right.

Each possible (partial) tiling starting from the left of the region represents an assignment of all the variables of the formula $\varphi$. By Fact \ref{fct_valid}, this partial tiling can be extended to the right and fill up the whole region in the end if and only if exactly one out of three variables is true for every clause. This completes the proof. 
\end{proof}

We summarize the key and novel techniques used in the proof of Theorem \ref{thm_main} to decrease the number of Wang tiles below.

\begin{itemize}
    \item Redundant signals are added between the real variables of the formula. This makes an independent sub-region for each variable on the left of the region. As a consequence, we are able to decrease the number of Wang tiles for assigning the true values to variables.
    \item Utilizing both the top and bottom boundary to encode the location of the transposition, compared to Jed Yang's original method \cite{y_thesis} which only the top boundary is used. This enables us to decrease the number of Wang tiles for the crossovers.
    \item The right boundary of the region is also optimized. As a result, the number of Wang tiles for the clauses is decreased.
\end{itemize}

Note also that, in terms of generalized Wang tiles, Theorem \ref{thm_main} states that there is a set of $14$ generalized Wang tile such that tiling simply connected regions with this set is \textsf{NP}-complete. This improves the result in \cite{py13} where $15$ generalized Wang tiles are needed. It is also mentioned in \cite{py13} that by using Ollinger's method \cite{o09} to simulate any fixed set of Wang tiles by a set of $11$ polyominoes (which can be viewed as generalized Wang tiles), the number can be further decreased to $11$. Ollinger's result has been improved to $8$ by the authors of the current paper in \cite{yz24}, which implies that there is a set of $8$ generalized Wang tile such that tiling simply connected regions with this set is \textsf{NP}-complete.

\section{Tiling with Rectangles}\label{sec_rect}

In this section, we prove the \textsf{NP}-completeness of tiling simply connected regions with a set $R$ of $111$ rectangles, which improves the $117$ rectangles result of Jed Yang \cite{y_thesis}. To achieve this improvement, we need a variant of Theorem \ref{thm_main}, where the Wang tiles cannot have the same color on their parallel sides. In other words, for each Wang tile, the colors of the north and south sides must be different, and the colors of the west and east sides must also be different.

\begin{Lemma}\label{lem_wang}
    There exists a set $T$ of $29$ Wang tiles whose parallel sides are assigned different colors such that the tiling problem for finite simply connected regions with $T$ is \textsf{NP}-complete. 
\end{Lemma}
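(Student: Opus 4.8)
The plan is to modify the $23$-tile construction of Theorem \ref{thm_main} so that no Wang tile has a repeated color on a parallel pair of sides, paying for this with a few extra tiles. First I would locate every offending tile in Figure \ref{fig_wang_23}. The variable tile $V_0$, the clause construction, the forwarders, and the crossovers all contain tiles whose north and south sides share the background color $b$ (and the anchor tiles $L_i$, $R_i$ likewise carry $l$, $l$ or $r$, $r$ on their horizontal sides, and $i$, $i$ on their vertical sides). The key observation is that these repeated colors are precisely the ``inert'' colors that appear on the boundary of the region or are used only to glue a generalized Wang tile together vertically; they carry no signal. So the fix is a standard $2$-coloring/parity trick: replace a single inert color $b$ by two colors $b_{\mathrm{even}}, b_{\mathrm{odd}}$ and stratify the rows of every region so that a horizontal edge between row $k$ and row $k+1$ gets color $b_k \bmod 2$. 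Each generalized Wang tile of height $1$ then splits into (at most) two variants according to the parity of the row it occupies, and a tile of height $3$ such as $V_0$ simply has its three internal/external horizontal edges colored $b_0, b_1, b_0$ (or the shifted version), which already makes consecutive horizontal colors distinct within the tile and still forces $V_0$ into a height-$3$ slot.

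Concretely I would carry out the following steps. (1) Do the same parity split on the vertical inert colors: the anchor tiles carry $i$ on both vertical sides purely to let a signal ``pass around'' them, so replace $i$ by $i_0, i_1$ keyed to column parity — but since signals do change from column to column inside the crossover sub-region, it is cleaner to instead recolor the anchors so their two vertical sides carry the genuine signal color (which is what they already do in the variants with $0$,$0$ or $1$,$1$!). The truly problematic anchors are then only those whose vertical sides would otherwise read $0,0$ or $1,1$; for those I route the signal through a forwarder-like intermediate and give the anchor vertical colors $0,\bar 0$ with a matching crossover/forwarder on the far side. (2) Recount. Each of the row-parity-split tiles ($F_i$, the clause tiles $C_i$, the crossovers $X_{ij}$) at most doubles, but most do not actually need both parities because the regions in the reduction can be built so that a given tile type only ever lands on rows of one fixed parity — the redundant separator rows $z_1,z_2$ give exactly the slack needed to control parities. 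A careful bookkeeping, isolating which of the $23$ tiles genuinely needs a second variant, should land at $29$. (3) Re-verify Fact \ref{fct_valid} and the crossover analysis verbatim: the parity bits never interact with the signal bits, so the ``if and only if'' in the reduction is untouched, and membership in \textsf{NP} is immediate.

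The main obstacle is the anchor tiles and the crossover tile $X$, where the two arrays of $L$'s and $R$'s meet. In the original construction the color $i$ on both vertical faces of an anchor, and the colors $l,l$ / $r,r$ on the horizontal faces of the crossover, are load-bearing: they are what make the southwest-extending array of $R$'s rigid and what pins $X$ in place. If I naïvely parity-split these, I must make sure the rigidity argument (``the only tile that can go below $r$ is another $R$, shifted one unit left'') still goes through with the parities alternating correctly down a diagonal — which it does, since moving one step southwest changes both row and column parity, but this needs to be checked against the actual geometry of Figure \ref{fig_crossover_region}. A secondary subtlety is that the region's boundary edges must now also be assigned parity-consistent inert colors, so the construction of $D$ from $\varphi$ needs the trivial addition of a parity-labelling of boundary segments; this is clearly still polynomial-time.

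I expect the honest difficulty to be purely in the accounting: showing that the unavoidable doublings sum to exactly $6$ new tiles rather than more, which is why the statement commits to the specific number $29$. The conceptual content — parity-splitting inert colors, rerouting the two ``diagonal'' anchor variants — is routine once one accepts that signal colors and background colors can be kept disjoint.
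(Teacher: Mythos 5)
Your diagnosis of which tiles violate the parallel-sides condition is correct, and the row-parity split of the background color $b$ is in the same spirit as what the paper does (the paper reuses the existing color $l$ so that the horizontal edges in each column alternate $b,l,b,l,\dots$ in a checkerboard, rather than introducing new colors $b_{\mathrm{even}},b_{\mathrm{odd}}$). But there is a genuine gap at the heart of the argument: the repeated colors on the forwarders and left anchors are \emph{not} inert --- the east and west sides of $F_i$ and $L_i$ both carry the signal value $i$, and these tiles fill essentially the entire central region. A column-parity split of the signal colors would double the signal alphabet and hence double the forwarders, left anchors and crossover variants, blowing well past $29$. The paper's key idea, which your ``$0,\bar 0$'' remark gestures at but never develops, is to transmit every signal by \emph{complementation}: each forwarder and left anchor has west side $i$ and east side $1-i$, so a signal alternates $0,1,0,1,\dots$ along its row and is read off correctly by forcing the central part of the region to have \emph{even} width (padding one column if necessary). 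This keeps the forwarders and left anchors at two tiles each while satisfying the constraint, and it is the mechanism that makes the whole count work.

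The second gap is precisely the accounting you defer. Complementation has concrete knock-on costs that determine the number $29$: the clause tile $C_1$ must grow to three unit squares because redirecting a true signal down one row now takes two columns; the crossovers $X_{i,1-i}$ must be enlarged to $2\times 2$ generalized Wang tiles (a $1\times 2$ swap of two complementary alternating signals would put the same color on the east and west of each constituent square); and $V_1$ splits into two tiles $V_{1x},V_{1y}$ to fix its horizontal sides. Your hope that ``a given tile type only ever lands on rows of one fixed parity'' is also false inside the crossover sub-regions, where the diagonal array of right anchors and the surrounding checkerboard of $F$'s and $L$'s sweep through both parities; the paper instead verifies that the $b/l$ checkerboard is compatible with the anchor diagonal and with the recolored top and bottom boundaries, whose color words now depend on the parity of the sub-region's width and height. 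Without the global complementation scheme and this explicit re-derivation of the tile count, the proposal does not establish the figure $29$.
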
  
    
\begin{proof}
The proof of Lemma \ref{lem_wang} is based on Theorem \ref{thm_main}. Both the set of tiles (Figure \ref{fig_wang_2}) and the simply connected region (Figure \ref{fig_region_2}) are modified slightly from that of Theorem \ref{thm_main} to accommodate the new restriction that parallel sides have to be assigned different colors for each Wang tile.

The key idea is the modification of the way that signals are being transmitted from left to right, originally developed in \cite{y_thesis}. Instead of forwarding the signal unchanged, the signals are now being transmitted by switching alternatively between $0$ and $1$ when passing through each Wang tile. This is achieved by modifying the tiles accordingly. Both the forwarders and left anchors are changed such that the two vertical sides of them are colored differently, namely one vertical side is $0$ and the other side is $1$ (see Figure \ref{fig_wang_2}). Therefore, by checking the signals of every other tile along transmission path from left to right, we can receive the original signal being emitted. To make sure that the signals have been transmitted to the right correctly, we can set the width of the central part of the region to be even, see Figure \ref{fig_region_2}. This can be done by padding a column (consisting of only tiles $F$ and $L$) to the central part to adjust the parity of its width if needed.

\begin{figure}[H]
\begin{center}
\begin{tikzpicture}


\draw (0,0)--(1,0)--(1,3)--(0,3)--(0,0);

\foreach \y in {1,2}
{
\draw (1,\y)--(0,\y);
\node at (0.2,\y+0.5) {$v$}; 
\node at (0.8,\y+0.5) {$0$};
}

\draw (0,0)--(0.5,0.5)--(1,0);\draw (0,3)--(0.5,2.5)--(1,3);
\draw (0.5, 0.5)--(0.5,2.5);
\node at (0.5,0.2) {$b$}; \node at (0.5,2.8) {$l$};
\node at (0.2,0.5) {$v$}; 
\node at (0.8,0.5) {$0$};

\foreach \x in {0}
{
\draw (\x+4,0)--(\x+3,0)--(\x+3,1)--(\x+4,1)--(\x+4,0)--(\x+3,1);
\draw (\x+3,0)--(\x+4,1);
\node at (\x+3.5,0.2) {$b$}; 
\node at (\x+3.5,0.8) {$l$}; 
\node at (\x+3.8,0.5) {$1$}; 
\node at (\x+3.2,0.5) {$v$}; 
}

\foreach \x in {0}
\foreach \y in {2}
{
\draw (\x+4,0+\y)--(\x+3,0+\y)--(\x+3,1+\y)--(\x+4,1+\y)--(\x+4,0+\y)--(\x+3,1+\y);
\draw (\x+3,0+\y)--(\x+4,1+\y);
\node at (\x+3.5,0.2+\y) {$l$}; 
\node at (\x+3.5,0.8+\y) {$b$}; 
\node at (\x+3.8,0.5+\y) {$1$}; 
\node at (\x+3.2,0.5+\y) {$v$}; 
}


\foreach \x in {3}
{
\draw (\x+4,0)--(\x+3,0)--(\x+3,1)--(\x+4,1)--(\x+4,0)--(\x+3,1);
\draw (\x+3,0)--(\x+4,1);
\node at (\x+3.8,0.5) {$0'$}; 
\node at (\x+3.2,0.5) {$0$}; 
\node at (\x+3.5,0.2) {$l$}; 
\node at (\x+3.5,0.8) {$b$}; 
}

\foreach \x in {9}
{
\draw (\x+1,0)--(\x+0,0)--(\x+0,2)--(\x+1,2)--(\x+1,0);
\draw (\x+0,1)--(\x+2,1)--(\x+2,0)--(\x+1,0);

\draw (\x+0,0)--(\x+0.5,0.5)--(\x+1.5,0.5)--(\x+2,0);\draw (\x+0,2)--(\x+0.5,1.5)--(\x+1,2);
\draw (\x+0.5, 0.5)--(\x+0.5,1.5); \draw (\x+1.5,0.5)--(\x+2,1);
\node at (\x+0.2,0.5) {$0$}; 
\node at (\x+1.8,0.5) {$1$};
\node at (\x+0.2,1.5) {$1$}; 
\node at (\x+0.8,1.5) {$0'$};

\node at (\x+0.5,0.2) {$b$};
\node at (\x+0.5,1.8) {$b$};
\node at (\x+1.5,0.8) {$l$};
\node at (\x+1.5,0.2) {$b$};
}


\foreach \x in {-5}
\foreach \y in {-3}
{
\draw (\x+4,0+\y)--(\x+3,0+\y)--(\x+3,1+\y)--(\x+4,1+\y)--(\x+4,0+\y)--(\x+3,1+\y);
\draw (\x+3,0+\y)--(\x+4,1+\y);
\node at (\x+3.5,0.2+\y) {$b$}; 
\node at (\x+3.5,0.8+\y) {$l$}; 
\node at (\x+4.2,0.5+\y) {$1-i$}; 
\node at (\x+3.2,0.5+\y) {$i$}; 
}


\foreach \x in {-2}
\foreach \y in {-3}
{
\draw (\x+4,0+\y)--(\x+3,0+\y)--(\x+3,1+\y)--(\x+4,1+\y)--(\x+4,0+\y)--(\x+3,1+\y);
\draw (\x+3,0+\y)--(\x+4,1+\y);
\node at (\x+3.5,0.2+\y) {$l$}; 
\node at (\x+3.5,0.8+\y) {$b$}; 
\node at (\x+4.2,0.5+\y) {$1-i$}; 
\node at (\x+3.2,0.5+\y) {$i$}; 
}


\foreach \x in {4}
\foreach \y in {-3}
{
\draw (\x+0,0+\y)--(\x+2,0+\y)--(\x+2,1+\y)--(\x+0,1+\y)--(\x+0,0+\y);
\draw (\x+1,1+\y)--(\x+1,0+\y);

\draw (\x+0,0+\y)--(\x+0.5,0.5+\y)--(\x+0,1+\y);\draw (\x+2,0+\y)--(\x+1.5,0.5+\y)--(\x+2,1+\y);
\draw (\x+0.5, 0.5+\y)--(\x+1.5,0.5+\y);
\node at (\x+0.2,0.5+\y) {$i$}; 
\node at (\x+1.8,0.5+\y) {$i$};
\node at (\x+0.5,0.8+\y) {$b$}; 
\node at (\x+0.5,0.2+\y) {$r$};
\node at (\x+1.5,0.8+\y) {$r$}; 
\node at (\x+1.5,0.2+\y) {$b$};
}


\foreach \x in {8}
\foreach \y in {-3}
{
\draw (\x+0,0+\y)--(\x+1,0+\y)--(\x+1,2+\y)--(\x+0,2+\y)--(\x+0,0+\y);
\draw (\x+0,1+\y)--(\x+1,1+\y);

\draw (\x+0,0+\y)--(\x+0.5,0.5+\y)--(\x+1,0+\y);\draw (\x+0,2+\y)--(\x+0.5,1.5+\y)--(\x+1,2+\y);
\draw (\x+0.5, 0.5+\y)--(\x+0.5,1.5+\y);
\node at (\x+0.5,0.2+\y) {$b$}; \node at (\x+0.5,1.8+\y) {$r$};
\node at (\x+0.2,0.5+\y) {$i$}; 
\node at (\x+1.2,0.5+\y) {$1-i$};
\node at (\x+0.2,1.5+\y) {$i$}; 
\node at (\x+1.2,1.5+\y) {$1-i$};
}


\foreach \x in {7}
\foreach \y in {-3}
{
\draw (\x+4,0+\y)--(\x+6,0+\y)--(\x+6,2+\y)--(\x+4,2+\y)--(\x+4,0+\y)--(\x+4.5,0.5+\y)--(\x+5.5,0.5+\y)--(\x+5.5, 1.5+\y)--(\x+4.5, 1.5+\y)--(\x+4.5, 0.5+\y);
\draw (\x+4.5, 1.5+\y)--(\x+4,2+\y); \draw (\x+5.5, 1.5+\y)--(\x+6,2+\y);
\draw (\x+5.5, 0.5+\y)--(\x+6,0+\y); \draw (\x+4,1+\y)--(\x+6,1+\y); \draw (\x+5,0+\y)--(\x+5,2+\y);

\node at (\x+4.2,1.5+\y) {$i$}; 
\node at (\x+5.8,0.5+\y) {$i$}; 
\node at (\x+3.8,0.5+\y) {$1-i$}; 
\node at (\x+6.2,1.5+\y) {$1-i$}; 

\node at (\x+4.5,1.8+\y) {$r$}; 
\node at (\x+5.5,1.8+\y) {$b$}; 
\node at (\x+5.5,0.2+\y) {$b$}; 
\node at (\x+4.5,0.2+\y) {$b$}; 
}

\node at (1.5,0.2) {$V_0$}; 
\node at (4.5,0.2) {$V_{1x}$};
\node at (4.5,2.2) {$V_{1y}$}; 
\node at (7.5,0.2) {$C_0$}; 
\node at (11.5,0.2) {$C_1$}; 

\node at (-0.5,-3) {$F_i$}; 
\node at (2.5,-3) {$L_i$}; 
\node at (6.5,-3) {$R_i$}; 
\node at (9.5,-3) {$X_{ii}$}; 
\node at (13.8,-3) {$X_{i,1-i}$}; 

\end{tikzpicture}
\end{center}
\caption{The variables ($V_0, V_{1x}, V_{1y}$), clauses ($C_0, C_1$), forwarders ($F_i, i=0,1$), left anchors ($L_i, i=0,1$), right anchors ($R_i, i=0,1$),  crossovers ($X_{ii}$ and $X_{i,1-i}, i=0,1$) (modified).}\label{fig_wang_2}  
\end{figure}

Similarly, the colors of the horizontal sides of the Wang tiles of each column of the tiling also change alternatively, between the color $b$ and color $l$. So the top and bottom sides of the forwarders $F_i (i=1,2)$ are $l$ and $b$, respectively. The top and bottom sides of the left anchors $L_i (i=1,2)$ are $b$ and $l$, respectively. After these modifications, there is really no difference in the functionality of these two kinds of tiles. The forwarders and left anchors are placed side by side to forward the signals from the left to the right. Furthermore, unlike the proof of Theorem \ref{thm_main} where only the left anchors are used in controlling the positions of the crossover tiles from the bottom boundary of the region, the forwarders and left anchors now work together to control the crossover tiles as we will explain later.

The variable $V_1$ for setting the true value $1$ in Theorem \ref{thm_main} are replaced by two variables $V_{1x}$ and $V_{1y}$ to avoid having the same colors in their horizontal sides. The two new tiles $V_{1x}$ and $V_{1y}$ are obtained from $V_1$ by changing the color of the north side or south side from $b$ to $l$, respectively. The color of the north side of $V_0$ is also changed from $b$ to $l$. The colors of the boundary on the left part of the region are changed slightly according to this change (see Figure \ref{fig_region_2}).

\begin{figure}[H]
\begin{center}
\begin{tikzpicture}[scale=0.6]

\draw [fill=orange!20,draw=none] (-1,0)--(0,0)--(0,3)--(-1,3)--(-1,0);
\draw [fill=orange!20,draw=none] (-1,4)--(0,4)--(0,7)--(-1,7)--(-1,4);
\draw [fill=orange!20,draw=none] (-1,8)--(0,8)--(0,11)--(-1,11)--(-1,8);

\draw [fill=lime!20,draw=none]  (0,0)--(8,0)--(8,11)--(0,11)--(0,0);
\draw [fill=lime!40,draw=none]  (8,0)--(12,0)--(11,5)--(12,9)--(11,11)--(8,11)--(8,0);
\draw [fill=lime!40,draw=none]  (12.5,0)--(11.5,5)--(12.5,9)--(11.5,11)--(15,11)--(15,0)--(12.5,0);
\draw [fill=lime!20,draw=none]  (15,0)--(15,11)--(20,11)--(20,0)--(15,0);

\foreach \y in {0,4,8}
{
\draw [fill=violet!15,draw=none] (20,0+\y)--(22,0+\y)--(22,2+\y)--(21,2+\y)--(21,3+\y)--(20,3+\y)--(20,0+\y);
\draw [fill=violet!30,draw=none] (22,0+\y)--(24,0+\y)--(24,1+\y)--(23,1+\y)--(23,2+\y)--(22,2+\y)--(22,0+\y);
}

\draw (-1,11)--(-1,8)--(0,8)--(0,7)--(-1,7)--(-1,4)--(0,4)--(0,3)--(-1,3)--(-1,0)--(12,0)--(11,5)--(12,9)--(11,11)--(-1,11);
\draw (12.5,0)--(11.5,5)--(12.5,9)--(11.5,11)--(21,11)--(21,10)--(23,10)--(23,9)--(24,9)--(24,8)--(20,8)--
(20,7)--(21,7)--(21,6)--(23,6)--(23,5)--(24,5)--(24,4)--(20,4)--
(20,3)--(21,3)--(21,2)--(23,2)--(23,1)--(24,1)--(24,0)--(22,0)--(12.5,0);

\foreach \y in {1,2,3,5,6,7,9,10,11}
{
\node at (-1.3,\y-0.5) {$v$}; 
}

\foreach \x in {0}
\foreach \y in { 3.2,  7.2,  11.2}
{
\node at (\x-0.5,\y) {\small $l$}; 
}

\foreach \x in {0}
\foreach \y in {-0.2,   3.8,  7.8}
{
\node at (\x-0.5,\y) {\small $b$}; 
}

\foreach \x in {21}
\foreach \y in {3.2, 7.2,  11.2}
{
\node at (\x-0.5,\y) {\small $b$}; 
}

\foreach \x in {21}
\foreach \y in {-0.2, 3.8,  7.8 }
{
\node at (\x-0.5,\y) {\small $l$}; 
}

\foreach \x in {22}
\foreach \y in {-0.2, 2.2, 3.8, 6.2, 7.8, 10.2}
{
\node at (\x-0.5,\y) {\small $l$}; 
}

\foreach \x in {23}
\foreach \y in {-0.2, 2.2, 3.8, 6.2, 7.8, 10.2}
{
\node at (\x-0.5,\y) {\small $b$}; 
}

\foreach \x in {24}
\foreach \y in {-0.2,   3.8, 7.8 }
{
\node at (\x-0.5,\y) {\small $b$}; 
}

\foreach \x in {24}
\foreach \y in { 1.2,   5.2,  9.2}
{
\node at (\x-0.5,\y) {\small $l$}; 
}

\foreach \y in {3.5, 7.5}
{
\node at (-0.2,\y) {\small $0$}; 
}

\foreach \y in {3,7,11}
{
\node at (21.3,\y-0.5) {\small $0'$}; 
\node at (23.3,\y-1.5) {\small $0'$}; 
\node at (24.3,\y-2.5) {\small $1$}; 
}

\draw [<->,thick] (0,-1)--(20,-1);  
\node at (10,-0.6) {even width};
\draw (0,-1.5)--(0,-0.5); 
\draw (20,-1.5)--(20,-0.5); 

\end{tikzpicture}
\end{center}
\caption{The simply connected region (modified).}\label{fig_region_2}
\end{figure}


\begin{figure}[H]
\begin{center}
\begin{tikzpicture}[scale=0.6]

\draw (0,0)--(4,0)--(4,1)--(3,1)--(3,2)--(2,2)--(1,2)--(1,3)--(0,3)--(0,0); 

\draw (0,2)--(1,2)--(1,0); \draw (0,1)--(3,1);

\draw (2,0)--(2,2); \draw (3,0)--(3,1);

\node at (1.3,2.5) {\small $0'$}; 
\node at (3.3,1.5) {\small $0'$}; 
\node at (4.3, 0.5) {\small $1$}; 

\node at (0.5, 2.5) {\small $C_0$}; \node at (0.5, 1.5) {\small $F_0$};  \node at (1.5, 1.5) {\small $F_1$};  \node at (2.5, 1.5) {\small $C_0$}; 
\node at (0.5, 0.5) {\small $L_1$}; \node at (1.5, 0.5) {\small $L_0$}; \node at (2.5, 0.5) {\small $F_1$}; \node at (3.5, 0.5) {\small $F_0$};

\foreach \x in {8}
{
\draw (\x+0,0)--(\x+4,0)--(\x+4,1)--(\x+3,1)--(\x+3,2)--(\x+2,2)--(\x+1,2)--(\x+1,3)--(\x+0,3)--(\x+0,0); 

\draw (\x+0,2)--(\x+1,2)--(\x+1,0); \draw (\x+0,1)--(\x+2,1);

\draw (\x+2,0)--(\x+2,2);

\node at (\x+1.3,2.5) {\small $0'$}; 
\node at (\x+3.3,1.5) {\small $0'$}; 
\node at (\x+4.3, 0.5) {\small $1$}; 

\node at (\x+0.5, 2.5) {\small $C_0$}; 
\node at (\x+0.5, 1.5) {\small $F_1$};  \node at (\x+1.5, 1.5) {\small $F_0$};   
\node at (\x+0.5, 0.5) {\small $L_0$}; \node at (\x+1.5, 0.5) {\small $L_1$}; \node at (\x+2.7, 0.7) {\small $C_1$};  

}

\foreach \x in {16}
{
\draw (\x+0,0)--(\x+4,0)--(\x+4,1)--(\x+3,1)--(\x+3,2)--(\x+2,2)--(\x+1,2)--(\x+1,3)--(\x+0,3)--(\x+0,0); 

\draw (\x+0,1)--(\x+2,1);

\draw (\x+2,0)--(\x+2,2);  
\draw (\x+1,0)--(\x+1,1);

\node at (\x+1.3,2.5) {\small $0'$}; 
\node at (\x+3.3,1.5) {\small $0'$}; 
\node at (\x+4.3, 0.5) {\small $1$};

\node at (\x+0.7, 1.7) {\small $C_1$};   
\node at (\x+0.5, 0.5) {\small $L_0$}; \node at (\x+1.5, 0.5) {\small $L_1$}; \node at (\x+2.7, 0.7) {\small $C_1$};  

}


\foreach \y in {0,1,2}
{
\draw [->, thick] (-1,0.5+\y)--(0,0.5+\y);
}
\node at (-1.2, 0.5) {\small $1$};  
\node at (-1.2, 1.5) {\small $0$};  
\node at (-1.2, 2.5) {\small $0$};

\foreach \x in {8}
{
\foreach \y in {0,1,2}
{
\draw [->, thick] (\x+-1,0.5+\y)--(\x+0,0.5+\y);
}
\node at (\x+-1.2, 0.5) {\small $0$};  
\node at (\x+-1.2, 1.5) {\small $1$};  
\node at (\x+-1.2, 2.5) {\small $0$};  
}

\foreach \x in {16}
{
\foreach \y in {0,1,2}
{
\draw [->, thick] (\x+-1,0.5+\y)--(\x+0,0.5+\y);
}
\node at (\x+-1.2, 0.5) {\small $0$};  
\node at (\x+-1.2, 1.5) {\small $0$};  
\node at (\x+-1.2, 2.5) {\small $1$};  
}

\end{tikzpicture}
\end{center}
\caption{The clause sub-regions (modified).}\label{fig_clause_region_2}
\end{figure}

The clause tiles are modified too. The clause tile $C_0$ is changed so that the two horizontal sides have different colors. The clause tile $C_1$ is changed by adding one more Wang tile because the signals are to be checked every two columns (recall that the signals are switching back and forth between $0$ and $1$ along the transmission path). The new tile $C_1$ redirects the signal $1$ one row downwards and two columns to the right along the transmission path. The colors of the boundary of the clause area are modified accordingly too. The partial tilings corresponding to the $3$ acceptable signal sequences for each clause, namely $001$, $010$, and $100$ are illustrated in Figure \ref{fig_clause_region_2}.

The right anchors $R_i (i=1,2)$ remain the same. But the crossovers $X_{ij} (i,j=1,2)$ need to be changed. Two of them, $X_{ii}$ for swapping two adjacent identical signals (both $1$ or both $0$), are modified in a very subtle way. The colors of the south side of $X_{ii} (i=1,2)$ are changed from $l$ to $b$. The other two, $X_{i,1-i}$ for swapping two different signals, are enlarged to a $2\times 2$ generalized Wang tiles to avoid having the same color on their vertical sides.


\begin{figure}[H]
\begin{center}
\begin{tikzpicture}[scale=0.6]

\draw [fill=lime!15] (0,0)--(8,0)--(8,11)--(0,11)--(0,0);

\foreach \y in {5,7,9}
{
\draw [fill=lime!45,draw=none]  (0,0+\y)--(0,1+\y)--(1,1+\y)--(1,0+\y)--(0,0+\y);
}
\foreach \y in {0,2,6,8,10}
\foreach \x in {1}
{
\draw [fill=lime!45,draw=none]  (\x+0,0+\y)--(\x+0,1+\y)--(\x+1,1+\y)--(\x+1,0+\y)--(\x+0,0+\y);
}
\foreach \y in {1,3,7,9}
\foreach \x in {2}
{
\draw [fill=lime!45,draw=none]  (\x+0,0+\y)--(\x+0,1+\y)--(\x+1,1+\y)--(\x+1,0+\y)--(\x+0,0+\y);
}
\foreach \y in {0,2,4,8,10}
\foreach \x in {3}
{
\draw [fill=lime!45,draw=none]  (\x+0,0+\y)--(\x+0,1+\y)--(\x+1,1+\y)--(\x+1,0+\y)--(\x+0,0+\y);
}
\foreach \y in {1,3,5,9}
\foreach \x in {4}
{
\draw [fill=lime!45,draw=none]  (\x+0,0+\y)--(\x+0,1+\y)--(\x+1,1+\y)--(\x+1,0+\y)--(\x+0,0+\y);
}
\foreach \y in {0,2,4,6,10}
\foreach \x in {5}
{
\draw [fill=lime!45,draw=none]  (\x+0,0+\y)--(\x+0,1+\y)--(\x+1,1+\y)--(\x+1,0+\y)--(\x+0,0+\y);
}
\foreach \y in {1,3,5,7}
\foreach \x in {6}
{
\draw [fill=lime!45,draw=none]  (\x+0,0+\y)--(\x+0,1+\y)--(\x+1,1+\y)--(\x+1,0+\y)--(\x+0,0+\y);
}
\foreach \y in {0,2,4,6,8}
\foreach \x in {7}
{
\draw [fill=lime!45,draw=none]  (\x+0,0+\y)--(\x+0,1+\y)--(\x+1,1+\y)--(\x+1,0+\y)--(\x+0,0+\y);
}


\draw   (0,0)--(8,0)--(8,11)--(0,11)--(0,0);

\foreach \y in {1,2,5,6,7,8,9,10,11}
{
\draw [->, thick] (-0.5,\y-0.5)--(8.5, \y-0.5);
}

\foreach \x in {0,...,6}
{
\draw [fill=white] (-\x+6,11-\x)--(-\x+8,11-\x)--(-\x+8,10-\x)--(-\x+6,10-\x)--(-\x+6,11-\x);
\node at (7-\x, 10.5-\x) {\small $R$}; 
}

\draw [fill=lime!45] (0,0)--(0,1)--(1,1)--(1,0)--(0,0);
\node at (0.5, 0.5) {\small $F$}; 

\draw [fill=lime!15] (0,1)--(0,2)--(1,2)--(1,1)--(0,1);
\node at (0.5, 1.5) {\small $L$}; 

\draw [fill=white] (1,2)--(1,4)--(0,4)--(0,2)--(1,2);
\node at (0.5,3.7) {\small $X_{ii}$}; 
\draw [->, thick, red] (-0.5, 3.5)--(0,3.5)--(1, 2.5)--(8.5, 2.5);
\draw [->, thick, blue] (-0.5, 2.5)--(0,2.5)--(1, 3.5)--(8.5, 3.5);

\foreach \x in {1,3,5,7}
{
\node at (\x-0.5,11.3) {\small $b$}; 
}
\foreach \x in {2,4,6}
{
\node at (\x-0.5,11.3) {\small $l$}; 
}

\foreach \x in {2,4,6}
{
\node at (\x+0.5,-0.3) {\small $l$}; 
}
\foreach \x in {3,5,7}
{
\node at (\x+0.5,-0.3) {\small $b$}; 
}

\node at (7.5,11.3) {\small $r$}; 
\node at (0.5,-0.3) {\small $b$}; \node at (1.5,-0.3) {\small $b$}; 


\foreach \z in {13}
{
\draw [fill=lime!15] (\z+0,0)--(\z+8,0)--(\z+8,11)--(\z+0,11)--(\z+0,0);

\foreach \y in {5,7,9}
{
\draw [fill=lime!45,draw=none]  (\z+0,0+\y)--(\z+0,1+\y)--(\z+1,1+\y)--(\z+1,0+\y)--(\z+0,0+\y);
}
\foreach \y in {0,2,6,8,10}
\foreach \x in {1}
{
\draw [fill=lime!45,draw=none]  (\z+\x+0,0+\y)--(\z+\x+0,1+\y)--(\z+\x+1,1+\y)--(\z+\x+1,0+\y)--(\z+\x+0,0+\y);
}
\foreach \y in {1,3,7,9}
\foreach \x in {2}
{
\draw [fill=lime!45,draw=none]  (\z+\x+0,0+\y)--(\z+\x+0,1+\y)--(\z+\x+1,1+\y)--(\z+\x+1,0+\y)--(\z+\x+0,0+\y);
}
\foreach \y in {0,2,4,8,10}
\foreach \x in {3}
{
\draw [fill=lime!45,draw=none]  (\z+\x+0,0+\y)--(\z+\x+0,1+\y)--(\z+\x+1,1+\y)--(\z+\x+1,0+\y)--(\z+\x+0,0+\y);
}
\foreach \y in {1,3,5,9}
\foreach \x in {4}
{
\draw [fill=lime!45,draw=none]  (\z+\x+0,0+\y)--(\z+\x+0,1+\y)--(\z+\x+1,1+\y)--(\z+\x+1,0+\y)--(\z+\x+0,0+\y);
}
\foreach \y in {0,2,4,6,10}
\foreach \x in {5}
{
\draw [fill=lime!45,draw=none]  (\z+\x+0,0+\y)--(\z+\x+0,1+\y)--(\z+\x+1,1+\y)--(\z+\x+1,0+\y)--(\z+\x+0,0+\y);
}
\foreach \y in {1,3,5,7}
\foreach \x in {6}
{
\draw [fill=lime!45,draw=none]  (\z+\x+0,0+\y)--(\z+\x+0,1+\y)--(\z+\x+1,1+\y)--(\z+\x+1,0+\y)--(\z+\x+0,0+\y);
}
\foreach \y in {0,2,4,6,8}
\foreach \x in {7}
{
\draw [fill=lime!45,draw=none]  (\z+\x+0,0+\y)--(\z+\x+0,1+\y)--(\z+\x+1,1+\y)--(\z+\x+1,0+\y)--(\z+\x+0,0+\y);
}


\draw   (\z+0,0)--(\z+8,0)--(\z+8,11)--(\z+0,11)--(\z+0,0);

\foreach \y in {1,2,5,6,7,8,9,10,11}
{
\draw [->, thick] (\z+-0.5,\y-0.5)--(\z+8.5, \y-0.5);
}

\foreach \x in {0,...,6}
{
\draw [fill=white] (\z+-\x+6,11-\x)--(\z+-\x+8,11-\x)--(\z+-\x+8,10-\x)--(\z+-\x+6,10-\x)--(\z+-\x+6,11-\x);
\node at (\z+7-\x, 10.5-\x) {\small $R$}; 
}

\draw [fill=lime!45] (\z+0,0)--(\z+0,1)--(\z+1,1)--(\z+1,0)--(\z+0,0);
\node at (\z+0.5, 0.5) {\small $F$}; 

\draw [fill=lime!15] (\z+0,1)--(\z+0,2)--(\z+1,2)--(\z+1,1)--(\z+0,1);
\node at (\z+0.5, 1.5) {\small $L$}; 

\draw [fill=white] (\z+2,2)--(\z+2,4)--(\z+0,4)--(\z+0,2)--(\z+2,2);
\node at (\z+1,3.6) {\small $X_{i,1-i}$}; 
\draw [->, thick, red] (\z+-0.5, 3.5)--(\z+0,3.5)--(\z+2, 2.5)--(\z+8.5, 2.5);
\draw [->, thick, blue] (\z+-0.5, 2.5)--(\z+0,2.5)--(\z+2, 3.5)--(\z+8.5, 3.5);

\foreach \x in {1,3,5,7}
{
\node at (\z+\x-0.5,11.3) {\small $b$}; 
}
\foreach \x in {2,4,6}
{
\node at (\z+\x-0.5,11.3) {\small $l$}; 
}

\foreach \x in {2,4,6}
{
\node at (\z+\x+0.5,-0.3) {\small $l$}; 
}
\foreach \x in {3,5,7}
{
\node at (\z+\x+0.5,-0.3) {\small $b$}; 
}

\node at (\z+7.5,11.3) {\small $r$}; 
\node at (\z+0.5,-0.3) {\small $b$}; \node at (\z+1.5,-0.3) {\small $b$}; 

}

\end{tikzpicture}
\end{center}
\caption{The crossover sub-regions (modified).}\label{fig_crossover_region_2}
\end{figure}

With these modifications to the crossovers, new crossover sub-regions of width $8$ in the central part are illustrated in Figure \ref{fig_crossover_region_2}. This region will force a swap for the signals on the $8$-th and $9$-th rows. The colors on the top boundary of this sub-region are $(bl)^3br$. In general, for a sub-region of width $2k$, the colors on the top boundary are $(bl)^{k-1}br$; and for a sub-region of width $2k+1$, the colors on the top boundary are $l(bl)^{k-1}br$. The unique color $r$ on the top boundary will force a tile $R$ to be placed on the upper right corner of this sub-region, and then an array of tiles $R$ will extend along the southwest direction. The rest area is filled with the tiles $F$ and $L$ in a checkerboard pattern. In Figure \ref{fig_crossover_region_2}, the darker green and lighter green represent forwarders $F$ and left anchors $L$, respectively. We can see that the right anchors are compatible with the checkerboard pattern, as the checkerboard patterns on the two sides of the array of right anchors match. Therefore, the checkerboard pattern is determined by the colors of the top boundary and extends downwards.

The bottom boundary of the crossover sub-region is colored according to the principle that, except the color of the first unit segment from the left, all other colors are compatible with the checkerboard pattern which is determined by the colors of the top boundary. Therefore, the colors on the bottom depend on the parity of the height of the sub-region. In the example of Figure \ref{fig_crossover_region_2}, where the height is odd (i.e. $11$), the colors of the bottom boundary are $bb(lb)^3$. If the height were even, the colors of the bottom boundary would have been $ll(bl)^3$. In other words, except for the first two colors, the color sequence of the bottom boundary is alternating between $b$ and $l$. Note that the only tiles that are not compatible with the checkerboard pattern are the crossover tiles $X_{ii} (i=1,2)$ and the first column of the crossover tiles $X_{i,1-i} (i=1,2)$. The incompatibility of the first color at the bottom boundary will force a crossover tile to be placed under the tile $R$ on the first column of this sub-region. The signals coming from the left determine whether $X_{ii}$ (left of Figure \ref{fig_crossover_region_2}) or $X_{i,1-i}$ (right of Figure \ref{fig_crossover_region_2}) to be used.

With all the above modifications, the total number of Wang tiles becomes $29$, this completes the proof. \end{proof}

Lemma \ref{lem_wang} improves a result of \cite{y_thesis} under the same condition. In \cite{y_thesis}, it is proved that there exist a set $T$ of $33$ Wang tiles with different colors on parallel sides such that the tiling problem for simply connected regions with $T$ is \textsf{NP}-complete.

We need the following lemma from \cite{y_thesis} to reduce a set of Wang tiles (with different colors on parallel sides) to a set of rectangles with respect to the tiling problem for simply connected regions.

\begin{Lemma}[\cite{y_thesis}]\label{lem_jed_yang}
Let $T$ be a set of Wang tiles such that the colors of parallel sides of each tile are different. If the problem of tiling simply connected regions with $T$ is \textsf{NP}-complete, then a set $R$ of rectangles can be constructed based on $T$ such that the problem of tiling simply connected regions with $R$ is also \textsf{NP}-complete.
\end{Lemma}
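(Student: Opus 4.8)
The plan is to give, in the spirit of \cite{py13,py13arxiv,y_thesis}, a polynomial-time reduction that converts the problem ``tile $D$ with $T$'' into a problem ``tile $\widetilde{D}$ with $R$'' for a finite set $R$ of rectangles \emph{depending only on $T$}; \textsf{NP}-hardness of the rectangle problem then follows from the hypothesis, and membership in \textsf{NP} is immediate since a tiling is a polynomial-size certificate. Write $q$ for the number of colours of $T$. First I would fix a large integer $N$ and an injection $\nu$ from the colour set into a short interval $\{1,\dots,c_0\}$ with $c_0\ll N$. Each Wang tile $t=(n,e,s,w)$ is represented by a \emph{macro-cell}: a conceptual $N\times N$ block holding one \emph{body} rectangle whose width and height are $N$ shrunk by colour-dependent margins (on the west by $\nu(w)$, on the east by $c_0-\nu(e)$, and symmetrically in the vertical direction), the residual ``mortar'' of the block being partitioned into a fixed list of \emph{cap} rectangles — one family of caps per colour, encoding that colour by a length — together with a constant number of plain filler and corner pieces. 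The region $\widetilde{D}$ is the $N$-fold inflation of $D$ in which the colours along the boundary of $D$ are realised by a staircase whose indentation depths are the $\nu$-codes of those colours. All of this is computable in time polynomial in $|D|$.

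For the forward direction, a valid Wang tiling of $D$ is turned into a tiling of $\widetilde{D}$ by placing in each block the body rectangle of the corresponding tile at its prescribed offsets, followed by the prescribed caps and fillers. The only place this can fail is along an interface between two adjacent macro-cells: there the two abutting caps have widths $c_0-\nu(e)$ and $\nu(w')$, which together tile the width-$c_0$ mortar channel exactly when $\nu(e)=\nu(w')$, i.e.\ exactly when the Wang colours match; since the Wang tiling is valid they do, so every channel is filled, and likewise at the boundary staircase. Hence $D$ tileable by $T$ implies $\widetilde{D}$ tileable by $R$.

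The substance of the argument — and the step I expect to be the main obstacle — is the converse: that \emph{every} tiling of $\widetilde{D}$ by $R$ arises this way. This is a \textbf{rigidity} statement, to be proved by propagation inward from the boundary. The staircase of $\widetilde{D}$ admits essentially one covering near the edge, which pins the outermost macro-cells together with their bodies and caps; an inductive peeling then recovers the full macro-cell grid throughout $\widetilde{D}$, inside each recovered cell an area count forces exactly one body rectangle, and the requirement that the two caps across a cell wall tile a width-$c_0$ channel forces the abutting colours to agree — exactly the Wang matching condition. The hypothesis that no tile of $T$ repeats a colour on parallel sides is used precisely here: it guarantees $\nu(w)\ne c_0-\nu(e)$ (and the vertical analogue) inside every tile, so a body rectangle has no alternative legal seat obtained by translating it through the difference of its two margins and a cell cannot be mis-reconstructed by relabelling a ``symmetric'' tile; without it the peeling could branch and produce spurious tilings. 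Getting this propagation argument fully airtight, i.e.\ ruling out all non-grid-aligned fillings, is the genuinely delicate part.

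Finally I would tally $R$: one body per tile of $T$, an $O(q)$ family of caps and boundary pieces realising the $\nu$-codes, and $O(1)$ plain fillers and corner pieces, so $|R|$ is a fixed finite number determined by $T$. Instantiating $T$ with the $29$-tile set of Lemma~\ref{lem_wang} and then optimising the constants hidden in the gadgets — the laborious bookkeeping that accounts for the size $111$ — yields the rectangle result used in Section~\ref{sec_rect}. Together with the forward and converse directions this shows that tiling simply connected regions with $R$ is \textsf{NP}-complete.
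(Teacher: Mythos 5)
Your overall strategy is the right one, and it is essentially the strategy of the construction the paper recalls from \cite{y_thesis}: inflate $D$ into macro-cells, represent each Wang tile by a small constellation of rectangles whose side lengths are a large base value perturbed by a colour-dependent amount (in \cite{y_thesis} the perturbations are powers of $5$, e.g.\ the tile rectangle has size $\big(14M+5^{E_\tau}-5^{W_\tau}\big)\times\big(31M+5^{S_\tau}-5^{N_\tau}\big)$ with $M=100\cdot 5^n$), and add a constant number of universal filler rectangles ($f$, $h$, $v$) so that adjacent constellations interlock exactly when the Wang colours match. So there is no disagreement about the route.

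The difficulty is that what you have written is a plan rather than a proof, and the step you defer is the entire content of the lemma. The forward direction and membership in \textsf{NP} are immediate; the lemma lives or dies on the rigidity claim that \emph{every} rectangle tiling of $\widetilde{D}$ decomposes into grid-aligned macro-cells, each realising a unique tile of $T$ with matching colours across cell walls. You explicitly label this ``the genuinely delicate part'' and offer only the slogan ``propagation inward from the boundary.'' Without concrete dimensions for the bodies, caps and fillers that slogan cannot be checked: nothing in your description rules out brick-laid or offset fillings of the mortar channels, nothing prevents a body rectangle of one tile from coinciding dimensionally with a combination of caps and fillers of another (in the actual construction distinct tiles deliberately \emph{do} share some of the rectangles $s_1,\dots,s_4$, which is exactly what makes the count $111$ possible, so such coincidences must be controlled rather than assumed away), and the choice of encoding matters --- the reason \cite{y_thesis} uses powers of $5$ scaled against $M=100\cdot 5^n$ is precisely so that no bounded combination of perturbations can imitate another, a property a generic injection $\nu$ into $\{1,\dots,c_0\}$ does not provide. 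Your account of where the parallel-sides hypothesis enters is likewise only heuristic; in the recalled construction its role is concrete: it forces $5^{E_\tau}-5^{W_\tau}\neq 0$ and $5^{S_\tau}-5^{N_\tau}\neq 0$, so the tile rectangle is never exactly $14M\times 31M$ and cannot be confused with a degenerate aligned placement. As it stands the proposal identifies the correct architecture but does not establish the lemma.
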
  

More precisely, the set of rectangles in the proof of Lemma \ref{lem_jed_yang} is constructed in the following way \cite{y_thesis}. Let $n$ be the total number of different colors in the set $T$ of Wang tiles, and let $M=100\cdot 5^n$. Relabel the $n$ different colors of Wang tiles by integers from $1$ to $n$. For each tile $\tau$ in $T$, let $N_{\tau}$, $S_{\tau}$, $W_{\tau}$ and $E_{\tau}$ be the colors (i.e. integers) of the north, south, west and east side of the tile $\tau$. Then the set of $R$ consists of the following rectangles \cite{y_thesis}.

\begin{itemize}
    \item A rectangle $f$ of size $11M \times 34 M$.
    \item A rectangle $h$ of size $31M \times 11 M$.
    \item A rectangle $v$ of size $34M \times 14M$.
    \item For each tile $\tau \in T$, construct $5$ rectangles: a rectangle $w$ of size $\big ( 14M+5^{E_{\tau}}-5^{W_{\tau}} \big ) \times \big ( 31M+5^{S_{\tau}}-5^{N_{\tau}} \big )$, a rectangle $s_1$ of size $\big ( 10M+5^{W_{\tau}} \big ) \times \big ( 10M-5^{N_{\tau}} \big )$, a rectangle $s_2$ of size $\big ( 10M-5^{E_{\tau}} \big ) \times \big ( 10M-5^{N_{\tau}} \big )$, a rectangle $s_3$ of size $\big ( 10M+5^{W_{\tau}} \big ) \times \big ( 10M+5^{S_{\tau}} \big )$ and a rectangle $s_4$ of size $\big ( 10M-5^{E_{\tau}} \big ) \times \big ( 10M+5^{S_{\tau}} \big )$.
\end{itemize}

By the above reduction method, if two Wang tiles have the same color on their west sides and north sides, respectively, they will generate the rectangles $s_1$ of the same size. Similarly, if two Wang tiles have the same color in two other adjacent sides (i.e., north and east, south and west, or south and east), they will generate the same rectangles $s_2$, $s_3$, or $s_4$. So we count the number of different rectangles of the types $w$ and $s_i (i=1,2,3,4)$ in Table \ref{tbl_rect}.

\begin{table}[H]
\begin{center}
\begin{tabular}{|c|r|r|r|r|r|r|}
\hline
Rectangles          &  Left anchors   &  Forwarders   &  Right anchors   &  Variables   &   Crossovers    &  Clauses    \\ \hline
$w$             & 2     &  2   &  4   &  5   &   12    &    4   \\ \hline
$s_1$           & 2      &   2  &   2  &  4   &    10   &    2     \\ \hline
$s_2$             & 2     &  2   &  4   &  2   &    6   &    2      \\ \hline
$s_3$             & 2     &  2   &  4   &   4  &   8    &     2     \\ \hline
$s_4$             & 2     &   2  &   2  &  2   &    6   &    3      \\ \hline
Total            & 10     &   10 &   16  &  17  &    42   &    13      \\ \hline
\end{tabular}
\end{center}
\caption{Counting rectangles}\label{tbl_rect}
\end{table}

We count the number of rectangles on Table \ref{tbl_rect} column by column from left to right. For the first two columns, we have two left anchors $L_i (i=1,2)$ and two forwarders $F_i (i=1,2)$, which generate a total of $4\times 5 =20$ rectangles. By cutting apart the right anchors $R_i (i=1,2)$, we get $4$ Wang tiles. Two of them have the same color on the north and west sides with $L_1$ and $L_2$, so only $2$ new rectangles of the type $s_1$ are introduced by the right anchors. Similarly, two of them have the same color on the south and east sides with $F_1$ and $F_2$, so the new rectangles of type $s_4$ are also only $2$. The rest three columns are counted in the same way, by ignoring the rectangles which have appeared in the preceding columns.

A special technique is applied to crossovers which further decreases the numbers of rectangles of the types $s_2$ and $s_4$. When breaking down a generalized Wang tile into (single square) Wang tiles, we always assigned new colors to the sides of each cut. In this way, those Wang tiles must be put together to form the original generalized Wang tile in any tilings. Cutting apart the crossover $X_{i,1-i}$ introduces four pairs of new sides. Three pairs of them are enough to ensure the Wang tiles to be put back together to the original $2\times 2$ crossovers. So we color the fourth pair with an existing color $l$ as illustrated in Figure \ref{fig_cut}. Using the existing color $l$ for the cut sides enables us to further decrease the number of rectangles of type $s_2$ and $s_4$.


\begin{figure}[H]
\begin{center}
\begin{tikzpicture}

\foreach \x in {-4}
\foreach \y in {0}
{
\draw (\x+4,0+\y)--(\x+6,0+\y)--(\x+6,2+\y)--(\x+4,2+\y)--(\x+4,0+\y)--(\x+4.5,0.5+\y)--(\x+5.5,0.5+\y)--(\x+5.5, 1.5+\y)--(\x+4.5, 1.5+\y)--(\x+4.5, 0.5+\y);
\draw (\x+4.5, 1.5+\y)--(\x+4,2+\y); \draw (\x+5.5, 1.5+\y)--(\x+6,2+\y);
\draw (\x+5.5, 0.5+\y)--(\x+6,0+\y); \draw (\x+4,1+\y)--(\x+6,1+\y); \draw (\x+5,0+\y)--(\x+5,2+\y);

\node at (\x+4.2,1.5+\y) {$i$}; 
\node at (\x+5.8,0.5+\y) {$i$}; 
\node at (\x+3.8,0.5+\y) {$1-i$}; 
\node at (\x+6.2,1.5+\y) {$1-i$}; 

\node at (\x+4.5,1.8+\y) {$r$}; 
\node at (\x+5.5,1.8+\y) {$b$}; 
\node at (\x+5.5,0.2+\y) {$b$}; 
\node at (\x+4.5,0.2+\y) {$b$};

\node [red] at (\x+5.6,1.2+\y) {$l$}; 
}
\draw [red, very thick] (1,1)--(2,1);

\end{tikzpicture}
\end{center}
\caption{Cutting apart $X_{i,1-i}, (i=0,1$).}\label{fig_cut}  
\end{figure}

Finally, by adding up the $w$ and $s_i (i=1,2,3,4)$ rectangles of the six columns of Table \ref{tbl_rect}, as well as $3$ rectangles $f$, $h$ and $v$, the total number of rectangles is
$$10+10+16+17+42+13+3=111.$$

So by combining Lemma \ref{lem_wang} and Lemma \ref{lem_jed_yang}, we have proved the following theorem.

\begin{Theorem}
    There exists a set $R$ of $111$ rectangles such that the tiling simply connected region with $R$ is \textsf{NP}-complete.
\end{Theorem}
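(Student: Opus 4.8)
The plan is to combine the two results already in hand. Lemma~\ref{lem_wang} gives a set $T$ of $29$ Wang tiles whose parallel sides carry distinct colors and for which tiling finite simply connected regions is \textsf{NP}-complete, and Lemma~\ref{lem_jed_yang} (from \cite{y_thesis}) turns any such tile set into a set $R$ of rectangles for which the tiling problem on simply connected regions is again \textsf{NP}-complete. Hence \textsf{NP}-hardness of tiling with $R$ is inherited at once, membership in \textsf{NP} is immediate (a tiling of a finite region is a polynomial-size certificate), and the only real work is to apply the explicit construction recalled after Lemma~\ref{lem_jed_yang} to our particular $29$ tiles and verify that it produces exactly $111$ rectangles.

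First I would run the recipe verbatim: set $n$ to be the number of colors in $T$, put $M = 100\cdot 5^n$, relabel colors by $1,\dots,n$, and form the three frame rectangles $f$, $h$, $v$ together with the five rectangles $w, s_1, s_2, s_3, s_4$ attached to each tile. A naive count gives $3 + 5\cdot 29 = 148$ rectangles, but the dimensions of $s_1,\dots,s_4$ depend only on the pair of colors appearing on two prescribed \emph{adjacent} sides of a tile, so any two tiles agreeing on that pair of sides contribute literally the same rectangle. The crux is therefore to track, group by group, how many genuinely new $w$- and $s_i$-rectangles each tile introduces, which is exactly what Table~\ref{tbl_rect} records. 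Here the way the generalized Wang tiles of Lemma~\ref{lem_wang} were cut into single squares matters: at each internal cut one normally introduces a fresh color, but at one of the four cut-edge pairs of the $2\times 2$ crossover $X_{i,1-i}$ we instead reuse the existing color $l$, as in Figure~\ref{fig_cut}, which prevents any new $s_2$- or $s_4$-rectangle from being born there.

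Next I would carry out the tally exactly as in Table~\ref{tbl_rect}, processing the six groups in the order left anchors, forwarders, right anchors, variables, crossovers, clauses, and at each stage counting only rectangles not already generated by an earlier group. This gives $10$, $10$, $16$, $17$, $42$, and $13$ new rectangles respectively; adding the three frame rectangles yields
$$10+10+16+17+42+13+3 = 111,$$
which is the claimed bound.

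The step I expect to be the main obstacle is the bookkeeping in Table~\ref{tbl_rect}, not the reduction itself, which is a black box from \cite{y_thesis}. For each of the four adjacent-side pairs one must determine precisely which earlier tiles already realize a given color pair --- for instance, that two of the four single squares obtained from the right anchors share their north and west colors with $L_1$ and $L_2$, and two of them share their south and east colors with $F_1$ and $F_2$ --- and one must also confirm that the specific cutting of the generalized tiles, in particular the reuse of $l$ at the crossover, really avoids every spurious rectangle. Any error in these overlap computations changes the final count, so verifying that exactly $108$ distinct $w$- and $s_i$-rectangles arise is the delicate part.
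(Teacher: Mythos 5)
Your proposal follows the paper's argument exactly: combine Lemma~\ref{lem_wang} (the $29$ Wang tiles with distinct colors on parallel sides) with the reduction of Lemma~\ref{lem_jed_yang}, then deduplicate the $w$- and $s_i$-rectangles group by group as in Table~\ref{tbl_rect}, including the reuse of the color $l$ at the crossover cut, to arrive at $10+10+16+17+42+13+3=111$. This is correct and is essentially the same proof as in the paper.
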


\section{Conclusions}\label{sec_con}

We prove the \textsf{NP}-completeness of tiling simply connected regions with a set of $23$ Wang tiles. As a consequence of a variant of this result, we also prove the \textsf{NP}-completeness of tiling simply connected regions with a set of $111$ rectangles. For tiling simply connected regions with a fixed set of rectangles, Pak and Yang made the following conjecture, which deserves further investigation.

\begin{Conjecture}[\cite{py13}]
    There exists a set $R$ of $3$ rectangles such that tiling simply connected regions with $R$ is \textsf{NP}-complete.
\end{Conjecture}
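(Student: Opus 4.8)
The plan is to separate the two directions sharply. Membership in \textsf{NP} is immediate: the three rectangles have fixed integer dimensions, so any tiling of a polyomino region $D$ uses at most $|D|$ pieces, and verifying that a guessed list of placements covers $D$ exactly and without overlap is polynomial in $|D|$. Every difficulty is concentrated in \textsf{NP}-hardness, and the central obstruction is that the route used for our $111$-rectangle result---Theorem~\ref{thm_main}, then Lemma~\ref{lem_wang}, then Lemma~\ref{lem_jed_yang}---is structurally incapable of reaching three, since it spends five rectangles per Wang tile just to encode the four edge colours. To get down to three, the combinatorics of the formula must be transferred almost entirely from the \emph{variety of tiles} to the \emph{shape of the region}, so I would abandon the Wang-tile intermediary and reduce \textsc{Cubic Monotone 1-in-3 SAT} directly to a rectangle-tiling instance for one fixed triple.

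First I would fix three rectangles built on the perturbation-scaling idea of \cite{y_thesis}: a single large ``filler'' $f$ of base size $M\times M$, and two ``signal'' rectangles $s_0,s_1$ that differ from a common near-$M$ size only by a sub-$M$ perturbation of the form $\pm 5^{c}$ in one coordinate. The base $M$ forces any tiling to align to an $M$-grid, and along each grid row and column the accumulated $\pm 5^{c}$ perturbations must cancel; this cancellation is the single mechanism that would carry all Boolean constraints. The region $D$ for a formula would be a long horizontal computation strip whose top and bottom boundaries carry unit-scale (order $5^{c}$) notches playing the role that edge colours played before: a notch of one depth admits $s_0$ in the adjacent slot, a notch of the other depth admits $s_1$, and $f$ fills the bulk. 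Variable assignment is imposed at the left boundary, mirroring the variable region of Figure~\ref{fig_region}, with each variable occupying an independent sub-strip so that its three signal tracks are forced to agree; transmission across the strip is a forced brick pattern of $f$'s carrying a single $s_i$ per track; and clause checking is done at the right boundary by a notch pattern that can be completed if and only if precisely one of the three incoming tracks carries $s_1$, reproducing Fact~\ref{fct_valid}.

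The hard part will be the crossover, together with the global rigidity on which it rests. In the Wang-tile proof a swap is realised by two arrays of anchors pinning a dedicated $X_{ij}$ tile at a boundary-determined interior site (Figure~\ref{fig_crossover_region}); with only three rectangles there are no dedicated swap tiles, and rectangles carry no directional data, so two tracks cannot simply cross. I expect the only workable device to be a purely geometric staircase: a boundary notch pattern forcing a diagonal array of $f$'s, in the manner of the forced anchor array, that routes the $+5^{c}$ perturbation of one track into the neighbouring row while the reciprocal $-5^{c}$ is forced back, the global cancellation bookkeeping guaranteeing that the two signals are exactly exchanged. With only three distinct sizes, however, the cancellation argument that rules out spurious tilings is far more delicate than in the many-rectangle regime of Lemma~\ref{lem_jed_yang}, where distinct colours give distinct sizes and lock each cell independently; one must prove that the $M$-grid alignment can never be broken and that every perturbation is forced into exactly one notch, with no residual freedom that could decouple a signal from its intended value or let a crossover leak. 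Designing a crossover staircase that is at once realisable with $f,s_0,s_1$ alone, forced by the boundary of $D$, and provably leak-free is precisely the crux, and I regard this combined rigidity-and-crossover step as the reason the statement remains a conjecture and as the first thing any successful proof must settle.
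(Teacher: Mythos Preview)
The paper contains no proof of this statement: it is quoted in the conclusions as an open conjecture of Pak and Yang \cite{py13} that ``deserves further investigation,'' and nothing in the paper attempts to establish it. So there is no proof to compare your proposal against.

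Your text is not a proof either, and to your credit you say so explicitly. What you have written is a research plan: fix a filler $f$ and two perturbed signal rectangles $s_0,s_1$, encode the formula in the boundary of $D$, and hope that a notch-driven staircase can realise crossovers. But the entire content of \textsf{NP}-hardness lives in the step you yourself flag as unresolved---showing that with only three sizes the $M$-grid alignment is forced, that perturbations cannot leak, and that a crossover gadget exists and is rigid. None of these claims is argued; they are stated as desiderata. In particular, the mechanism of Lemma~\ref{lem_jed_yang} relies essentially on having \emph{many} distinct rectangle sizes so that each cell is locked independently; stripping this down to three sizes removes exactly the tool that makes the rigidity proof go through, and you offer no replacement. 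The crossover is even more problematic: in the Wang-tile construction the swap is effected by a dedicated tile type $X_{ij}$ whose edge colours force the exchange, and there is no known way to simulate this with rectangles that carry no orientation or colour data. Your ``staircase'' suggestion is a picture, not an argument.

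In short, the proposal correctly identifies why the conjecture is hard and where a proof would have to do real work, but it does not do that work. It should be read as a discussion of obstacles, not as a proof attempt.
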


For tiling finite regions with a fixed set of Wang tiles, can we further improve Theorem \ref{thm_main} by using fewer number of Wang tiles?

\begin{Problem}
    Is there a set $W$ of less than 23 Wang tiles such that tiling simply connected regions with $W$ is \textsf{NP}-complete?
\end{Problem}

For general regions, as we have mentioned in the first section of this paper, there exists a set $W$ of $5$ Wang tiles such that tiling general finite regions with $W$ is \textsf{NP}-complete. For a single Wang tile, it is trivial to determine whether it can tile a finite region in polynomial time. For a set of $2$ or $3$ Wang tiles, the problem is also solvable in polynomial time by the following Theorem \ref{thm_p}. To prove Theorem \ref{thm_p}, we make use of a parameter for sets of Wang tiles which was first introduced by Jeandel and Rolin \cite{jr12}, and was later named by Yang and Zhang as \textit{color deficiency} \cite{yz24b}.

\begin{Definition}[Color deficiency]
    Given a set $T$ of Wang tiles, let $n(T)=|T|$ be the number of tiles in the set, and let $c(T)$ be the maximum number of different colors among the four sides in the set. The color deficiency, denoted by $cd(T)$ is defined by $cd(T)=n(T)-c(T).$
\end{Definition}

\begin{Theorem}\label{thm_p}
The problem of tiling general finite regions with a set of $2$ or $3$ Wang tiles is solvable in polynomial time.
\end{Theorem}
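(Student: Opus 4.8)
\emph{Proof strategy.} The plan is to recast the tiling question as an edge-colouring problem and then run a structural analysis driven by the colour deficiency $cd(T)=n(T)-c(T)$, which is very small here: two distinct Wang tiles differ in some coordinate, so $c(T)=2$ and $cd(T)=0$ when $n(T)=2$, while $c(T)\in\{2,3\}$ and hence $cd(T)\in\{0,1\}$ when $n(T)=3$. For the recasting, fix the input region $D$ together with its prescribed boundary colours; a tiling of $D$ by translates from $T$ is the same thing as an assignment of a colour to every interior unit edge of $D$ so that, at each cell, the $4$-tuple of colours on its four incident edges is the colour $4$-tuple of some $\tau\in T$. Since a Wang tile \emph{is} its colour $4$-tuple, in any such edge-colouring each cell is automatically forced to be the unique tile matching its four edge colours, so it suffices to decide whether such an edge-colouring exists. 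I would also treat each connected component of $D$ separately and let $m$ denote the input size.

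\textbf{The case $cd(T)=0$.} Here $c(T)=n(T)$, so after a rotation of the plane we may assume the north side carries pairwise distinct colours, i.e.\ $\tau\mapsto N_\tau$ is injective. The first step is to show that a valid tiling, if one exists, is unique and obtainable by a single sweep: process the cells of $D$ in order of decreasing $y$-coordinate (top to bottom); for a cell $c$, the edge immediately above $c$ is either a boundary edge of $D$ (colour prescribed) or an interior edge shared with a cell that lies higher, hence already processed, so the north colour of $c$ is determined, and injectivity then forces the tile at $c$ (abort if no tile of $T$ has that colour). After the sweep, accept iff the resulting candidate passes a direct check of the remaining constraints (the interior vertical edges and the south, east and west boundary edges). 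Correctness is immediate because any valid tiling must coincide with the swept candidate, and the procedure is clearly polynomial in $m$. This already disposes of every instance with $n(T)\le 2$, and of every $3$-tile instance with $c(T)=3$.

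\textbf{The case $n(T)=3$, $c(T)=2$.} Now every side carries at most two colours and the sweep no longer pins down every cell, so the plan is to branch on the ``colour type'' of $T$: which sides are monochromatic, which pairs among the three tiles agree on which sides, and whether the west-colour set and the east-colour set (respectively the north- and south-colour sets) are equal, disjoint, or overlapping. The easy sub-cases collapse to one dimension: if some side of $T$ is monochromatic then the adjacency constraints across that side become per-cell restrictions on the allowed tiles (or, if the opposite side mismatches, forbid adjacency altogether), and likewise if the west-colour set is disjoint from the east-colour set then no two cells can be horizontally adjacent; in all such situations $D$ decouples into independent width-$1$ columns or height-$1$ rows, each a one-dimensional chain that a trivial dynamic program (with at most three states per cell) decides. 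In the remaining configurations --- all four sides exactly two-coloured, with no blocked adjacency --- I would first propagate every tile assignment that the sweep does force; when this leaves each still-undetermined cell with exactly two admissible tiles, introducing one boolean per such cell turns all surviving constraints into unary constraints and binary constraints, each expressible as a conjunction of at most four $2$-clauses, i.e.\ an instance of \textsc{2SAT}, which is polynomial. For the stubborn configurations where forced propagation still leaves three options somewhere, I expect one must prove that valid tilings are then rigidly striped or periodic --- so that only polynomially many candidate tilings need be tested --- or that the constraint language encodes affine relations over a group of bounded order, so that the instance reduces to Gaussian elimination.

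\textbf{Main obstacle.} The delicate, and I expect hardest, step is this last one: establishing that in \emph{every} two-coloured colour type not already reduced to one dimension, the tilings are governed by one of these polynomially-decidable mechanisms. The danger is a colour type in which two surviving colours on one side propagate down a column and reopen all three tiles at a lower cell, defeating the boolean encoding; ruling this out forces one to exploit the two-colours-per-side structure of the \emph{other} sides of the tiles, and only once that analysis is finished do the \textsc{2SAT}, linear-algebra, or direct-enumeration arguments apply. Completing this bounded, though somewhat tedious, case check --- together with the $cd(T)=0$ sweep and the one-dimensional reductions --- yields a polynomial-time decision procedure and proves the theorem.
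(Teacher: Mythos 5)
Your overall frame (classify by colour deficiency, sweep when the tiles are distinguishable from one side, reduce to one-dimensional chains when a side is monochromatic) matches the paper's proof for the $cd(T)=0$ cases and for part of the $n(T)=3$, $c(T)=2$ case. But there is a genuine gap exactly where you flag your ``main obstacle'': you leave unresolved the configuration in which no side is monochromatic, and what you offer there (propagate, hope every cell drops to two admissible tiles, encode as \textsc{2SAT}, and otherwise conjecture that tilings are striped, periodic, or affine) is not an argument --- you explicitly say you do not know how to rule out cells that retain all three options. So the proposal does not prove the theorem.

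The missing idea is the paper's Fact~\ref{fct_corner}: if two \emph{adjacent} sides (say north and east) each carry two colours across the three tiles, then the three tiles are jointly distinguishable by the colour pair on some two adjacent sides. The short argument: if two tiles agree on both north and west, the third must take the other colour on each of those sides, and the two agreeing tiles must differ on east or south, which yields a distinguishing adjacent pair. Once you have this, your troublesome case dissolves: when all four sides are two-coloured (equivalently, whenever the one-dimensional reduction does not apply), you sweep from the appropriate corner of the region --- the corner cell has both relevant edges on the boundary, so its tile is forced, and peeling it off exposes a new corner whose two relevant edge colours are again known. The tiling is therefore unique if it exists and is found greedily in polynomial time; no \textsc{2SAT}, periodicity, or algebraic structure is needed. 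I would also note that the remaining case really is the one where two \emph{opposite} sides are monochromatic (if no adjacent pair is doubly two-coloured and some side is two-coloured, both of its neighbours are monochromatic), which is precisely your one-dimensional decoupling; that part of your proposal is fine.
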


\begin{proof}
    Let $W$ be a set of Wang tiles. If $|W|=2$, consider two cases according to the color deficiency of $W$. Let $W=\{X,Y\}$, namely the two Wang tiles are denoted by $X$ and $Y$.

\begin{itemize}
    \item If $cd(W)=1$, then for each of the four sides, the colors of $X$ and $Y$ are the same. Therefore, $X$ and $Y$ are identical, which contradicts the hypothesis that $|W|=2$.
    \item If $cd(W)=0$, then the color of at least one side of $X$ and $Y$ are different. Without loss of generality, assume the north side of $X$ and $Y$ are colored by $0$ and $1$, respectively. Let $R$ be an arbitrary finite region with each unit segment of its boundary labeled. Then Algorithm \ref{alg_top} solves the problem in polynomial time.
    
\begin{algorithm}[htb]
\caption{Tiling from one side (the top)}
\label{alg_top}
\begin{algorithmic}[1] 
\REQUIRE ~~\\ 
    The fixed set of Wang tiles $W=\{X,Y\}$, and a general region $R$.
\ENSURE ~~\\ 
    Output \textsf{yes} if $W$ tiles $R$, and \textsf{no} otherwise.
    \STATE If $R=\emptyset$, \textbf{return} \textsf{yes}. Otherwise, pick a unit segment $t$ on the top boundary of $R$, and let $s$ denote the unit square under $t$.
    \STATE If $t$ is labeled by color $0$, place the Wang tile $X$ on $s$. If $t$ is labeled by color $1$, place $Y$ on $s$. If $t$ is labeled by colors other than $0$ and $1$, \textbf{return} \textsf{no}.
    \STATE If the Wang tile placed on $s$ conflicts with the color of the boundary on other sides, \textbf{return} \textsf{no}; otherwise, let $R=R-\{s\}$ and \textbf{goto} step 1.
\end{algorithmic}
\end{algorithm}
\end{itemize}

Simply speaking, the two Wang tiles are \textit{distinguishable} from the north side, so we just tile the region $R$ square-by-square from the top boundary in Algorithm \ref{alg_top}. If the region can be tiled, the tiling is unique in this case.

If $|W|=3$, let $W=\{X,Y,Z\}$. We consider the following three cases according to the color deficiency of $W$.

\begin{itemize}
    \item If $cd(W)=2$, then $X$, $Y$ and $Z$ are in fact identical. This contradicts with the hypothesis that $|W|=3$.
    \item If $cd(W)=0$, then $X$, $Y$ and $Z$ are distinguishable from just one side (i.e. the colors of this side are different for $X$, $Y$ and $Z$). In this case, we can apply an algorithm similar to Algorithm \ref{alg_top}.
    \item If $cd(W)=1$, then each side of $X$, $Y$ and $Z$ has most two different colors, and at least one side has exactly two different colors. We further consider two subcases.
\end{itemize}

\begin{Fact}\label{fct_corner}
    Let $W=\{X,Y,Z\}$ be a set of $3$ Wang tiles, and $cd(W)=1$. If two neighboring sides both have two different colors, then the three Wang tiles $X$, $Y$ and $Z$ are distinguishable by some two neighboring sides. 
\end{Fact}

\begin{proof}[Proof of Fact \ref{fct_corner}]
    Without loss of generality, assume the set has two different colors on the north side and the west side. If the three Wang tiles can be distinguished by these two sides, we are done. Now suppose to the contrast that they are not distinguishable by these two sides, then two tiles are assigned the same colors on the north side and they are also assigned the same color on the west side. Without loss of generality, assume the tiles $X$ and $Y$ are colored $a$ on the west side and colored $0$ on the north side. By the hypothesis that the set has two colors in both the west side and the north side, the third tile $Z$ must be assigned colors different than that of $X$ and $Y$ on these two sides. Denote the colors on the west side and the north side of $Z$ by $b$ and $1$, respectively (see Figure \ref{fig_d_set}). Now $X$ and $Y$ must be assigned different colors in either the south side or the east side, otherwise, they are identical and contradict with $|W|=3$. Assume $X$ and $Y$ are colored differently on the east side, then $X$, $Y$ and $Z$ are distinguishable by two sides: north and east. This completes the proof of Fact \ref{fct_corner}.
\end{proof}

\begin{figure}[H]
\begin{center}
\begin{tikzpicture}[scale=0.5]

\foreach \x in {0}
{
\draw (\x+0,0)--(\x+2,0)--(\x+2,2)--(\x+0,2)--(\x+0,0);
\draw (\x+0,0)--(\x+2,2); \draw (\x+2,0)--(\x+0,2);
\node at (\x+1,1.6) {$0$};  
\node at (\x+0.4,1) {$a$};  \node at (\x+1, -0.8) {$X$}; 
}

\foreach \x in {4}
{
\draw (\x+0,0)--(\x+2,0)--(\x+2,2)--(\x+0,2)--(\x+0,0);
\draw (\x+0,0)--(\x+2,2); \draw (\x+2,0)--(\x+0,2);
\node at (\x+1,1.6) {$0$};
\node at (\x+0.4,1) {$a$};  \node at (\x+1, -0.8) {$Y$};
}

\foreach \x in {8}
{
\draw (\x+0,0)--(\x+2,0)--(\x+2,2)--(\x+0,2)--(\x+0,0);
\draw (\x+0,0)--(\x+2,2); \draw (\x+2,0)--(\x+0,2);
\node at (\x+1,1.6) {$1$};
\node at (\x+0.4,1) {$b$};  \node at (\x+1, -0.8) {$Z$};
}

\end{tikzpicture}
\end{center}
\caption{A distinguishable set of $3$ Wang tiles.}\label{fig_d_set}  
\end{figure}

If the condition in Fact \ref{fct_corner} is satisfied, then the set of $3$ Wang tiles are distinguishable by two neighboring sides. Without loss of generality, assume they are distinguishable by the north and east sides. More specifically, assume the colors of these two sides of $X$, $Y$ and $Z$ are $1a$, $1b$ and $2a$, respectively. Then Algorithm \ref{alg_corner} solves this case in polynomial time.

\begin{algorithm}[htb]
\caption{Tiling from the corner (the upper right corner)}
\label{alg_corner}
\begin{algorithmic}[1] 
\REQUIRE ~~\\ 
    The fixed set of Wang tiles $W=\{X,Y,Z\}$, and a general region $R$.
\ENSURE ~~\\ 
    Output \textsf{yes} if $W$ tiles $R$, and \textsf{no} otherwise.
    \STATE If $R=\emptyset$, \textbf{return} \textsf{yes}. Otherwise, pick a square $s$ at the upper right corner of $R$, let $t_1$ and $t_2$ denote the north side and east side of $s$, respectively (both $t_1$ and $t_2$ are on the boundary of $R$ because $u$ is a square at the upper right corner);
    \STATE If $t_1t_2$ is labeled by colors $1a$, $1b$ or $2a$, place the Wang tile $X$, $Y$ or $Z$ on $s$, respectively. If $t_1t_2$ is labeled by other color combinations, \textbf{return} \textsf{no}.
    \STATE If the Wang tile placed on $s$ conflicts with the color of the boundary on other sides, \textbf{return} \textsf{no}; otherwise, let $R=R-\{s\}$ and \textbf{goto} step 1.
\end{algorithmic}
\end{algorithm}

The other subcase for $cd(W)=1$ is that the condition in the Fact \ref{fct_corner} is not satisfied (i.e. no two neighboring sides of $X$, $Y$ and $Z$ both have two different colors). In this subcase, it is easy to see that there must exist two opposite sides of $X$, $Y$ and $Z$ such that both sides have exactly one color. Without loss of generality, we assume the north side of $X$, $Y$ and $Z$ are all colored $1$, and the south side of $X$, $Y$ and $Z$ are all colored $1$ too \footnote{It is possible that the north side and south side receive different colors, but this case is either trivial or can be treated similarly.}. Then the region $R$ can be divided into several horizontal bars $H$ of the form $m\times 1$, where each unit segment of the top and bottom of $H$ is labeled $1$, and left and right sides inherit the colors from the boundary of $R$. The set of Wang tiles can tile the region $R$ if and only if all horizontal bars can be tiled. Therefore, the problem is reduced to the tiling of horizontal bars. This problem is easy to be solved in polynomial time, and the set of $3$ Wang tiles in this subcase in fact has just a few possible combinations (up to renaming the colors). See Figure~\ref{fig_h_set} for an example of a possible set of $3$ different Wang tiles in this subcase. It is straightforward to see this set of Wang tiles can tile any horizontal bar $H$ of length at least $2$.

\begin{figure}[H]
\begin{center}
\begin{tikzpicture}[scale=0.5]

\foreach \x in {0}
{
\draw (\x+0,0)--(\x+2,0)--(\x+2,2)--(\x+0,2)--(\x+0,0);
\draw (\x+0,0)--(\x+2,2); \draw (\x+2,0)--(\x+0,2);
\node at (\x+1,0.4) {$1$}; \node at (\x+1,1.6) {$1$};
\node at (\x+0.4,1) {$a$}; \node at (\x+1.6,1) {$a$};
}

\foreach \x in {4}
{
\draw (\x+0,0)--(\x+2,0)--(\x+2,2)--(\x+0,2)--(\x+0,0);
\draw (\x+0,0)--(\x+2,2); \draw (\x+2,0)--(\x+0,2);
\node at (\x+1,0.4) {$1$}; \node at (\x+1,1.6) {$1$};
\node at (\x+0.4,1) {$a$}; \node at (\x+1.6,1) {$b$};
}

\foreach \x in {8}
{
\draw (\x+0,0)--(\x+2,0)--(\x+2,2)--(\x+0,2)--(\x+0,0);
\draw (\x+0,0)--(\x+2,2); \draw (\x+2,0)--(\x+0,2);
\node at (\x+1,0.4) {$1$}; \node at (\x+1,1.6) {$1$};
\node at (\x+0.4,1) {$b$}; \node at (\x+1.6,1) {$a$};
}

\end{tikzpicture}
\end{center}
\caption{A set of $3$ Wang tiles.}\label{fig_h_set}  
\end{figure}
    
By the above case-by-case discussion, the proof of Theorem \ref{thm_p} is complete.\end{proof}

Theorem \ref{thm_p} solves the cases of $|W|=2$ and $|W|=3$, so the only remaining case is the following problem.
    
\begin{Problem}
    What is the computational complexity of tiling general regions (i.e. allowing holes) with a set $W$ of $4$ Wang tiles? Is it solvable in polynomial time, or \textsf{NP}-complete?
\end{Problem}

\section*{Acknowledgements}
The first author was supported by the Research Fund of Guangdong University of Foreign Studies (Nos. 297-ZW200011 and 297-ZW230018), and the National Natural Science Foundation of China (No. 61976104).


\end{document}